\documentclass[11 pt]{article}

\usepackage{amssymb} 
\usepackage{amsmath} 
\usepackage{latexsym}
\usepackage{graphicx}

\usepackage{enumitem}

\graphicspath{ {images/} }
\usepackage{mathtools}

\providecommand{\keywords}[1]{\textbf{\textit{Keywords:}} #1}

\providecommand{\msc}[1]{{\textit{2000 Mathematics Subject Classification:}} #1}
\usepackage{tikz}
\usetikzlibrary{calc}
\usepackage{pgf,tikz}
\usetikzlibrary{arrows}
\usepackage{amscd}
\usepackage{relsize}

\usepackage{tikz-cd}

\definecolor{NoteColor}{rgb}{1,0,0}

\makeatletter

\makeatother

\usepackage{footnote}

\usepackage{ color, amsmath, amssymb, amsfonts, amscd, graphicx,latexsym, hyperref}
\usepackage{caption, subcaption}
\usepackage[pdf]{pstricks}
\usepackage{makeidx}

\usepackage[affil-it]{authblk}

\usepackage{amsthm}

\newtheorem{theorem}{Theorem}

\newtheorem{proposition}{Proposition}

\newtheorem{remark}{Remark}

\newtheorem{lemma}{Lemma}

\newtheorem{definition}{Definition}

\newcommand{\R}{\mathbb R}

\title{Minimal stretch maps between Euclidean triangles}
\author{\.{I}smail Sa\u{g}lam  \thanks{Electronic address: \texttt{isaglamtrfr@gmail.com}  }}
\affil{Adana Alparslan Turkes Science and Technology University, \\ Institut de Recherche Mathematique Avanc\'ee, CNRS et Universit\'e de Strasbourg}
\author{Athanase Papadopoulos \thanks{Electronic address: \texttt{papadop@math.unistra.fr}  }}
\affil{ Institut de Recherche Math\'ematique Avanc\'ee, CNRS et Universit\'e de Strasbourg}

\date{}

\begin{document}

\maketitle

\begin{abstract}
Given two triangles whose angles are all acute, we find a homeomorphism with the smallest 
Lipschitz constant between them and we give a formula for the Lipschitz constant of this map. We show that on the set of pairs of acute triangles with fixed area, the function which assigns the logarithm of the smallest Lipschitz constant of Lipschitz maps between them
 is a symmetric 
metric. We show that this metric is Finsler, we give a necessary and  sufficient 
condition for a path in this metric space to be geodesic and we determine the isometry group of this metric space. 

This study is motivated by Thurston's asymmetric metric on the Teichm\"uller space of a hyperbolic surface, and the results in this paper constitute an analysis of a basic Euclidean analogue of Thurston's hyperbolic theory. Many interesting questions in the Euclidean setting deserve further attention.
\end{abstract}
\keywords{Thurston's asymmetric metric, Teichm\"uller theory, space of Euclidean triangles, geodesics, Finsler structure}\\
\msc{51F99, 57M50,  32G15, 57K20}


\section{Introduction}

In this paper, we study a metric on a moduli space of Euclidean triangles which is an analogue of Thurston's (asymmetric) metric on the Teichm\"uller space of a surface of finite type.  Thurston introduced his metric in the 1985 preprint \cite{thurston}. Since then, the metric has been studied from various viewpoints. 
 A first survey appeared in 2007 \cite{PT}, in which properties of this metric were compared to analogous properties of the Teichm\"uller metric.  A more recent survey on this metric is in press \cite{Course}.   The paper \cite{S}, published in 2015, contains a set of open problems on this metric. The paper \cite{PS2015} contains several results on the comparison of the Thurston metric with the Teichm\"uller metric. 
Several new techniques have been introduced recently in the study of Thurston's metric; see in particular \cite{AD, HP, LRT}, and the metric has been generalized to various settings, see \cite{DGK, GK} for an analogous metric on spaces of geometrically finite hyperbolic manifolds and \cite{HS} for a generalization in the setting of higher Teichm\"uller theory. Euclidean analogues of this metric have also been studied, see \cite{athanase} for an analogue on the Teichm\"uller space of Euclidean tori, \cite{OMP} for a recent sequel, and the recent work \cite{Wolenski} on an analogue on the moduli space of semi-translation surfaces. 

There is a natural analogue of Thurston's metric on a basic model space, namely, the moduli space of Euclidean triangles. This elementary setting has not been investigated yet. Our aim in this article is to settle the case of the moduli space of acute Euclidean triangles (that is, triangles whose three angles are acute), which turns out to be a natural space to study.

We now present the main results of this paper.

Consider a triangle in the Euclidean plane. Label its vertices by the set $\{v_1,v_2,v_3\}$ so that this labeling induces a counter-clockwise orientation on the boundary of the triangle. We call such a triangle {\it labeled}. 

Consider two labeled triangles $T$ and $T'$ and let $f:  T\to T'$ be a label-preserving homeomorphism. The \emph{Lipschitz constant of $f$} is defined as

$$L(f)=\sup_{x,y \in T, x\neq y}\frac{d_{euc}(f(x),f(y))}{d_{euc}(x,y)}$$

\noindent where $d_{euc}$ is the metric in the Euclidean plane. Now let

\begin{align*}
L(T,T')=&\log\big(\inf\{L(f): f \ \text{is a label-preserving} \\
& \text{homeomorphism between $T$ and $T'$}\}\big). 
 \end{align*}
 
 This formula induces a distance function on the space of Euclidean triangles  which is an analogue of Thurston's Lipschitz metric defined in the hyperbolic setting (see  \cite[p. 4]{thurston} where this distance function is also denoted by $L$).

We obtain the following: 

\begin{itemize}
	\item 
	 In the case where $T$ and $T'$ are acute triangles, we give another formula for the distance $L(T,T')$, which we denote by $m(T,T')$, in terms of  the lengths of the edges and altitudes of the triangles $T$ and $T'$. The new formula   is an analogue of Thurston's definition of his distance in terms of lengths of simple closed geodesics (see  \cite[p. 4]{thurston} where this distance function is denoted by $K$), and the equality $L(T,T')=m(T,T')$ is an analogue of Thurston's equality between his two distance functions $K$ and $L$ (see \cite[p. 40]{thurston}).
	\item
	For every $A>0$, the metric induced by $L$ on the space $\frak{AT}_A$ of acute triangles having fixed area $A$ is Finsler (this is an analogue of Thurston's result in \cite[p. 20]{thurston}).
	\item
	We give a characterization of geodesics  in $\frak{AT}_A$:  a path is geodesic if and only if the angle at each labeled vertex of a triangle in this family of triangles varies monotonically.
	\item
	The isometry group of $\frak{AT}_A$ is isomorphic to $S_3$, the group of permutations of $\{1,2,3\}$.
	
\end{itemize} 

The paper is organized as follows. Section \ref{section-acute} is the technical heart of our work. For any two labeled triangles $T$ and $T'$, we define their Lipschitz distance, and then we define the distance $m(T,T')$ and show that $m(T,T')=L(T,T')$ for any acute triangles $T$ and $T'$. 
In Section \ref{section-space-triangles} we introduce several spaces of triangles and study some of their topological and metric properties. Among these spaces, the space of acute triangles and the space of non-obtuse triangles having area $A$ will play central roles in this paper. We denote these spaces by $\frak{AT}_A$ and $\overline{\frak{AT}_A}$ respectively.
In Section \ref{section-special-geodesic} we give a necessary and sufficient condition for a path in $\overline{\frak{AT}_A}$ to be a geodesic. In Section \ref{section-finsler} we prove that  the metric $L$ on $\frak{AT}_A$ is Finsler. We determine the isometry group 
of $\overline{\frak{AT}_A}$ in Section \ref{section-symmetry}.

We introduce some notation used throughout the  paper. Let $T$ be a labeled triangle, with vertices $v_1, v_2, v_3$.  For $i \in \{1,2,3\}$, we denote the edge opposite to the vertex $v_i$ by $e_i$. We denote the angle at the vertex $v_i$ by $\theta_i$ and the altitude from the vertex $v_i$ by $h_i$.
We let $\lvert\lvert e_i \rvert \rvert $ and $\lvert \lvert h_i \rvert \rvert$ be the lengths of $e_i$ and $h_i$, respectively. We denote the intersection of the altitude $h_i$
with the line which contains the edge $e_i$ by $p_i$. A triangle with vertices $v_1,v_2,v_3$ is denoted  by $\Delta v_1v_2v_3$. The line segment between two points $x$ and $y$ in the Euclidean plane is denoted by $[x,y]$, and its length by $\lvert[x,y]\rvert$. Finally, $\mathrm{Area}(T)$ will denote the area of a triangle $T$. 

 \section{Acute triangles}
\label{section-acute}

Assume that $T$ and $T'$ are two acute triangles. We use the notation introduced above for the triangle $T$. Similarly, for  $T'$, we denote the  vertices by $v'_1, v'_2, v'_3$, the edge opposite to the end $v'_i$ by $e'_i$,  the angle at the vertex $e'_i$ by $\theta'_i$, etc.

 \begin{proposition}
 	\label{esitsizlik}
 For any two acute triangles $T$ and $T'$, we have:
$$\exp(L(T,T'))\geq \max\{\frac{\lvert\lvert e'_1 \rvert \rvert}{\lvert \lvert e_1 \rvert\rvert}, \frac{\lvert\lvert e'_2 \rvert \rvert}{\lvert \lvert e_2 \rvert\rvert},\frac{\lvert\lvert e'_3 \rvert \rvert}{\lvert \lvert e_3 \rvert\rvert},\frac{\lvert\lvert h'_1 \rvert \rvert}{\lvert \lvert h_1 \rvert\rvert},\frac{\lvert\lvert h'_2 \rvert \rvert}{\lvert \lvert h_2 \rvert\rvert},\frac{\lvert\lvert h'_3 \rvert \rvert}{\lvert \lvert h_3 \rvert\rvert}\}.$$
\end{proposition}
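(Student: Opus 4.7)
The plan is to prove that for every label-preserving homeomorphism $f:T\to T'$, the Lipschitz constant $L(f)$ already dominates each of the six ratios on the right-hand side of the claimed inequality; taking the infimum over $f$ and applying the logarithm then yields the statement.

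First I would dispose of the three edge ratios. A label-preserving homeomorphism between the two closed triangular disks sends $\partial T$ onto $\partial T'$ and respects the labels of the vertices, so $f(v_j)=v'_j$ for every $j$. Applying the Lipschitz bound to the pair of vertices spanning $e_i$ gives, for $\{i,j,k\}=\{1,2,3\}$,
$$\lvert\lvert e'_i\rvert\rvert=d_{euc}(v'_j,v'_k)=d_{euc}(f(v_j),f(v_k))\leq L(f)\cdot d_{euc}(v_j,v_k)=L(f)\cdot\lvert\lvert e_i\rvert\rvert.$$

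Next I would handle the altitude ratios; this is the step where the acuteness hypothesis enters. Because $T$ is acute, the foot $p_i$ of the altitude from $v_i$ lies in the open edge $e_i$, hence in $T$. Since $f$ is label-preserving it sends $e_i$ bijectively onto $e'_i$, so $f(p_i)\in e'_i$. Applying the Lipschitz bound to the pair $(v_i,p_i)$ gives
$$d_{euc}(v'_i,f(p_i))\leq L(f)\cdot d_{euc}(v_i,p_i)=L(f)\cdot\lvert\lvert h_i\rvert\rvert.$$
Because $T'$ is also acute, the foot $p'_i$ lies in $e'_i$, so $\lvert\lvert h'_i\rvert\rvert$ equals the minimum of $d_{euc}(v'_i,\cdot)$ over the edge $e'_i$ (not merely over the full line containing $e'_i$); in particular
$$\lvert\lvert h'_i\rvert\rvert=\min_{z\in e'_i}d_{euc}(v'_i,z)\leq d_{euc}(v'_i,f(p_i)).$$
Combining the last two displays yields $\lvert\lvert h'_i\rvert\rvert\leq L(f)\cdot\lvert\lvert h_i\rvert\rvert$.

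Taking the infimum over all label-preserving homeomorphisms $f$ and then the logarithm produces the six inequalities $L(T,T')\geq\log(\lvert\lvert e'_i\rvert\rvert/\lvert\lvert e_i\rvert\rvert)$ and $L(T,T')\geq\log(\lvert\lvert h'_i\rvert\rvert/\lvert\lvert h_i\rvert\rvert)$, and exponentiating gives the proposition. There is no serious obstacle; the only delicate point is the double use of acuteness in the altitude argument. The acuteness of $T$ is needed so that $p_i\in T$ and $f(p_i)$ makes sense and lies on $e'_i$, while the acuteness of $T'$ is needed so that $\lvert\lvert h'_i\rvert\rvert$ is realized as the distance from $v'_i$ to some point of $e'_i$; for an obtuse triangle the foot of the perpendicular lies outside the corresponding edge, and the argument breaks down.
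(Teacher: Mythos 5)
Your proof is correct and follows essentially the same route as the paper's: the edge ratios come from the labeled vertices being matched, and the altitude ratios come from the foot $p_i$ lying in the edge $e_i$ (by acuteness of $T$), so that $d_{euc}(v'_i,f(p_i))\geq \lvert\lvert h'_i\rvert\rvert$ while $d_{euc}(v_i,p_i)=\lvert\lvert h_i\rvert\rvert$. The only (harmless) difference is that you invoke acuteness of $T'$ to justify $\lvert\lvert h'_i\rvert\rvert\leq d_{euc}(v'_i,f(p_i))$, whereas this already holds for any $T'$ because $f(p_i)$ lies on the line containing $e'_i$ and $\lvert\lvert h'_i\rvert\rvert$ is the perpendicular distance from $v'_i$ to that line.
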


\begin{proof}
 
Since any label-preserving homeomorphism from $T$ to $T'$ sends $e_1$ to $e'_1$, $e_2$ to $e'_2$ and $e_3$ to $e'_3$, it is clear that 

$$\exp(L(T,T'))\geq \max\{ \frac{\lvert\lvert e'_1 \rvert \rvert}{\lvert \lvert e_1 \rvert\rvert}, \frac{\lvert\lvert e'_2 \rvert \rvert}{\lvert \lvert e_2 \rvert\rvert},\frac{\lvert\lvert e'_3 \rvert \rvert}{\lvert \lvert e_3 \rvert\rvert} \}.$$

Let $f: T\to T'$ be a label-preserving homeomorphism. Since $T$ is acute it follows that each $p_i$ lies in the interior of the edge $e_i$. Therefore we have 

$$d_{euc}(f(p_i),f(v_i))=d_{euc}(f(p_i),v'_i)\geq \lvert\lvert h'_i \rvert \rvert.$$

Since 

$$d_{euc}(p_i,v_i)=\lvert\lvert h_i\rvert\rvert,$$
 it follows that $L(f)\geq \frac{\lvert\lvert h'_i\rvert\rvert}{\lvert\lvert h_i\rvert\rvert}$. Therefore for any two acute triangles $T$ and $T'$, we have:

 $$\exp(L(T,T'))\geq \max\{\frac{\lvert\lvert e'_1 \rvert \rvert}{\lvert \lvert e_1 \rvert\rvert}, \frac{\lvert\lvert e'_2 \rvert \rvert}{\lvert \lvert e_2 \rvert\rvert},\frac{\lvert\lvert e'_3 \rvert \rvert}{\lvert \lvert e_3 \rvert\rvert},\frac{\lvert\lvert h'_1 \rvert \rvert}{\lvert \lvert h_1 \rvert\rvert},\frac{\lvert\lvert h'_2 \rvert \rvert}{\lvert \lvert h_2 \rvert\rvert},\frac{\lvert\lvert h'_3 \rvert \rvert}{\lvert \lvert h_3 \rvert\rvert}\},$$
 which is the  inequality we need.
 \end{proof}

For two arbitrary labeled triangles $T$ and $T'$ we define
\begin{equation}\label{formula-m}
 m(T,T')=\log(\max\{\frac{\lvert\lvert e'_1 \rvert \rvert}{\lvert \lvert e_1 \rvert\rvert}, \frac{\lvert\lvert e'_2 \rvert \rvert}{\lvert \lvert e_2 \rvert\rvert},\frac{\lvert\lvert e'_3 \rvert \rvert}{\lvert \lvert e_3 \rvert\rvert},\frac{\lvert\lvert h'_1 \rvert \rvert}{\lvert \lvert h_1 \rvert\rvert},\frac{\lvert\lvert h'_2 \rvert \rvert}{\lvert \lvert h_2 \rvert\rvert},\frac{\lvert\lvert h'_3 \rvert \rvert}{\lvert \lvert h_3 \rvert\rvert}\}).
\end{equation}

\noindent Assume that we scale the triangle $T$ by a factor $\lambda$ and the triangle $T'$ by a factor $\lambda'$, where $\lambda, \lambda'>0$. Let us denote the scaled triangles by $\lambda T$ and $\lambda' T'$. This means that the triangle $\lambda T$ has edge lengths $\lambda \lvert\lvert e_1\rvert \rvert$, $\lambda \lvert\lvert e_2\rvert \rvert$ and $\lambda \lvert\lvert e_3\rvert \rvert$ and that the triangle $\lambda'T'$
has side lengths $\lambda' \lvert\lvert e'_1\rvert \rvert$, $\lambda' \lvert\lvert e'_2\rvert \rvert$ and $\lambda' \lvert\lvert e'_3\rvert \rvert$. The following formulae are clear:
\begin{equation}
	\label{scaling-L}
 \exp(L(\lambda T, \lambda' T'))= \frac{\lambda'}{\lambda}\exp(L(T,T'))
  \end{equation}

\begin{equation}
	\label{scaling-m}
\exp(m(\lambda T, \lambda' T'))= \frac{\lambda'}{\lambda} \exp(m(T,T'))
\end{equation}

\begin{remark}
	We shall prove that for any two acute triangles $T$ and $T'$, $L(T,T')=m(T,T')$. See Theorem \ref{acute-proof}. We shall use the following fact in the proof. If $\lambda,\lambda'>0$, $L(T,T')=m(T,T')$ if and only if $L(\lambda T,\lambda' T')=m(\lambda T, \lambda' T)$. This follows from Inequalities (\ref{scaling-L}) and (\ref{scaling-m}).
\end{remark}

\subsection{Right triangles}
\label{right-triangles}

Right triangles appear naturally as sitting on the boundary of the moduli space of acute triangle. 

Assume that $T$ and $T'$ are two right triangles where $\theta_1$ and $\theta'_1$ are equal to $\frac{\pi}{2}$. See Figure \ref{right-triangle}. In this subsection we calculate $L(T,T')$.

\begin{figure}
	\hspace*{-4cm} 
	\includegraphics[scale=0.7]{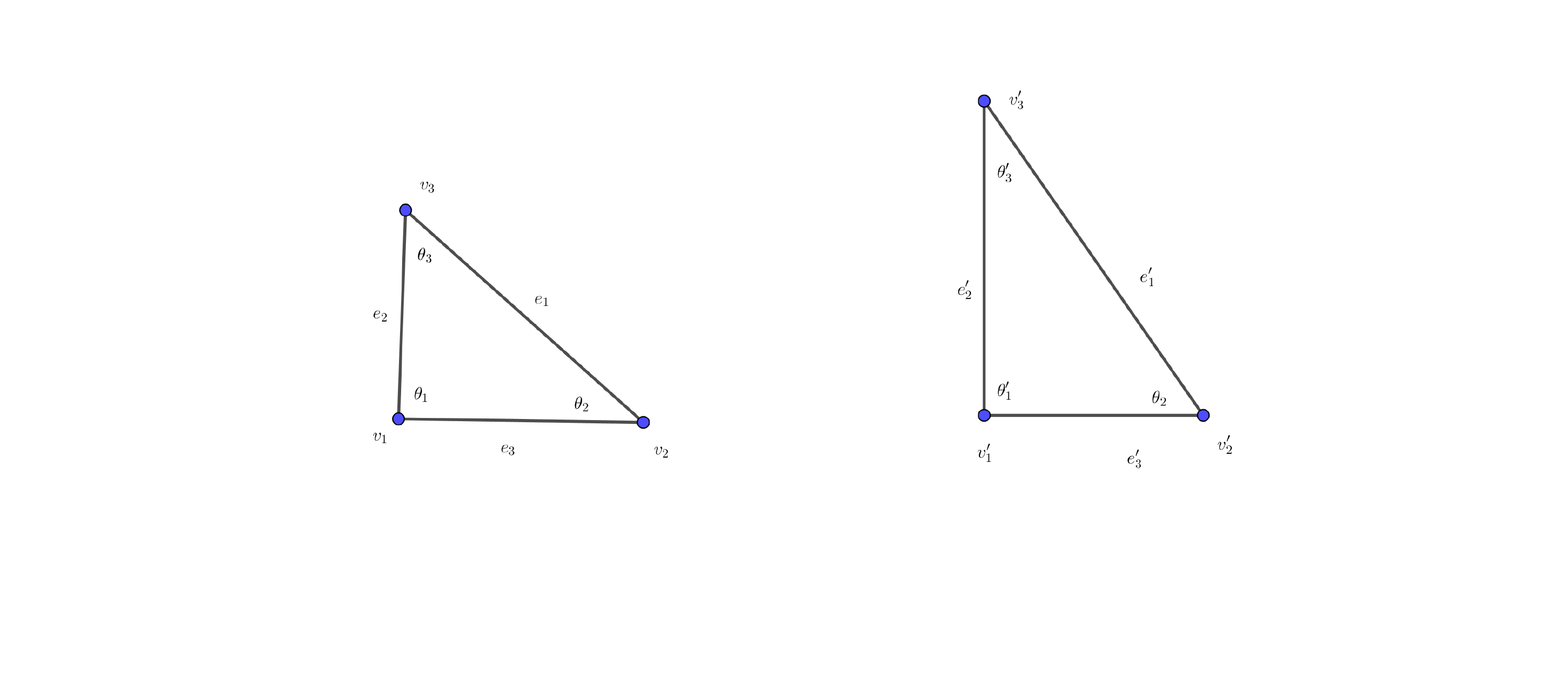}
	
	\caption{A best Lipschitz map between the two triangles is the affine map.}
	\label{right-triangle}
\end{figure}

\begin{proposition}
	Let $T$ and $T'$ be two right triangles so that $\theta_1=\theta'_1=\frac{\pi}{2}$. Then $$\exp(L(T,T'))= \max\{ \frac{\lvert\lvert e'_3 \rvert \rvert}{\lvert \lvert e_3 \rvert\rvert}, \frac{\lvert \lvert e'_2\rvert \rvert}{\lvert \lvert e_2\rvert \rvert} \}.$$
\end{proposition}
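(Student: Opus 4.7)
The plan is to show matching upper and lower bounds. The lower bound is immediate from the fact that any label-preserving homeomorphism $f \colon T \to T'$ must send the edge $e_i$ onto the edge $e'_i$; in particular its restriction to $e_2$ (resp.\ $e_3$) has Lipschitz constant at least $\|e'_2\|/\|e_2\|$ (resp.\ $\|e'_3\|/\|e_3\|$), so
\[
\exp(L(T,T')) \ge \max\Bigl\{\frac{\|e'_2\|}{\|e_2\|},\; \frac{\|e'_3\|}{\|e_3\|}\Bigr\}.
\]
This is essentially the same argument used in the edge part of Proposition \ref{esitsizlik}, and it requires no acuteness.

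For the reverse inequality, I would exhibit an explicit affine map realizing this bound. Place $T$ in coordinates so that the right-angle vertex $v_1$ sits at the origin, $v_3$ lies on the positive $x$-axis, and $v_2$ on the positive $y$-axis; do the same for $T'$. Set $\lambda_2 = \|e'_2\|/\|e_2\|$ and $\lambda_3 = \|e'_3\|/\|e_3\|$ and define the diagonal affine map
\[
f \colon T \to T', \qquad f(x,y) = (\lambda_2\, x,\; \lambda_3\, y).
\]
Because the legs of $T$ lie on the coordinate axes, $f$ carries $v_i$ to $v'_i$ for $i=1,2,3$ and hence sends $T$ onto $T'$ as a label-preserving homeomorphism. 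For any vector $(a,b)$ one computes
\[
\|f(a,b)\|^2 = \lambda_2^2\, a^2 + \lambda_3^2\, b^2 \le \max\{\lambda_2,\lambda_3\}^2 (a^2+b^2),
\]
with equality achieved along a coordinate axis, so $L(f) = \max\{\lambda_2,\lambda_3\}$.

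Combining the two bounds yields
\[
\exp(L(T,T')) = \max\Bigl\{\frac{\|e'_2\|}{\|e_2\|},\; \frac{\|e'_3\|}{\|e_3\|}\Bigr\},
\]
which is the claim. There is no real obstacle here; the point the proof makes is simply that in the right-angled case the edge lower bound from Proposition \ref{esitsizlik} is already tight, realized by the diagonal affine map (which incidentally also shows that the altitude ratios appearing in the acute case do not give new information here, since for a right triangle at $v_1$ one has $\|h_2\|=\|e_3\|$ and $\|h_3\|=\|e_2\|$, while the altitude $h_1$ meets the hypotenuse $e_1$ in its interior and its image's length is controlled automatically by $\max\{\lambda_2,\lambda_3\}$).
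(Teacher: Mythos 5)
Your proof is correct and follows essentially the same route as the paper: the edge ratios give the lower bound, and the diagonal affine map in coordinates adapted to the two legs realizes it, with the same norm estimate $\lambda_2^2a^2+\lambda_3^2b^2\leq \max\{\lambda_2,\lambda_3\}^2(a^2+b^2)$. The only (harmless) difference is a choice of which leg lies on which axis, and your closing remark about the altitudes is accurate but not needed.
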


\begin{proof}
\noindent It is clear that 

\begin{equation}\label{eq:geq}
\exp(L(T,T'))\geq \max\{ \frac{\lvert\lvert e'_3 \rvert \rvert}{\lvert \lvert e_3 \rvert\rvert}, \frac{\lvert \lvert e'_2\rvert \rvert}{\lvert \lvert e_2\rvert \rvert} \}.
\end{equation}

\noindent Let us show that the other inequality is true as well.

We consider a Euclidean system of coordinates $(x,y)$ in $\mathbb{R}^2$.

\noindent By performing some isometries to $T$ and $T'$, we may assume that the vertices $v_1$ and $v'_1$ are at the origin, the sides $e_3$ and $e'_3$ are on the $x$-axis, and the sides $e_2$ and $e'_2$ are on the $y$-axis.
Consider the following homeomorphism from $T$ to $T'$:

$$f: (x,y)\to (\frac{\lvert \lvert e'_3\rvert \rvert  }{\lvert \lvert e_3\rvert \rvert}x,\frac{\lvert \lvert e'_2\rvert \rvert  }{\lvert \lvert e_2\rvert \rvert}y).$$

\noindent It is clear that 

$$L(f)\geq \max\{ \frac{\lvert\lvert e'_3 \rvert \rvert}{\lvert \lvert e_3 \rvert\rvert}, \frac{\lvert \lvert e'_2\rvert \rvert}{\lvert \lvert e_2\rvert \rvert} \}.$$

\noindent Let $q_1=(x_1,y_1)$ and $q_2=(x_2,y_2)$ be two points in $T$. 

Then 

$$d_{euc}(f(q_1),f(q_2))= d_{euc}((\frac{\lvert \lvert e'_3\rvert \rvert  }{\lvert \lvert e_3\rvert \rvert}x_1,\frac{\lvert \lvert e'_2\rvert \rvert  }{\lvert \lvert e_2\rvert \rvert}y_1),(\frac{\lvert \lvert e'_3\rvert \rvert  }{\lvert \lvert e_3\rvert \rvert}x_2,\frac{\lvert \lvert e'_2\rvert \rvert  }{\lvert \lvert e_2\rvert \rvert}y_2) )
$$
$$
= \sqrt{(\frac{\lvert \lvert e'_3\rvert \rvert  }{\lvert \lvert e_3\rvert \rvert})^2(x_2-x_1)^2+(\frac{\lvert \lvert e'_2\rvert \rvert  }{\lvert \lvert e_2\rvert \rvert})^2(y_2-y_1)^2}
$$
$$\leq \max\{ \frac{\lvert\lvert e'_3 \rvert \rvert}{\lvert \lvert e_3 \rvert\rvert}, \frac{\lvert \lvert e'_2\rvert \rvert}{\lvert \lvert e_2\rvert \rvert} \}
\sqrt{(x_2-x_1)^2+(y_2-y_1)^2}
$$

$$ = \max\{ \frac{\lvert\lvert e'_3 \rvert \rvert}{\lvert \lvert e_3 \rvert\rvert}, \frac{\lvert \lvert e'_2\rvert \rvert}{\lvert \lvert e_2\rvert \rvert} \} d(q_1,q_2).$$

\noindent Therefore $L(f)\leq \max\{ \frac{\lvert\lvert e'_3 \rvert \rvert}{\lvert \lvert e_3 \rvert\rvert}, \frac{\lvert \lvert e'_2\rvert \rvert}{\lvert \lvert e_2\rvert \rvert} \}$. Combined with (\ref{eq:geq}), this gives $L(f)=\max\{ \frac{\lvert\lvert e'_3 \rvert \rvert}{\lvert \lvert e_3 \rvert\rvert}, \frac{\lvert \lvert e'_2\rvert \rvert}{\lvert \lvert e_2\rvert \rvert} \}$. 
From this we conclude that $\exp(L(T,T'))= \max\{ \frac{\lvert\lvert e'_3 \rvert \rvert}{\lvert \lvert e_3 \rvert\rvert}, \frac{\lvert \lvert e'_2\rvert \rvert}{\lvert \lvert e_2\rvert \rvert} \}$. 
\end{proof}

Note that $\exp(L(T,T'))$ is given by an infimum and this infimum is attained by the map we just constructed. Therefore the map we defined above is a {\it best Lipschitz map}.

\subsection{A problem related to right triangles}
\label{another-problem-right}
\begin{figure}
	\hspace*{-3cm} 
	\includegraphics[scale=0.7]{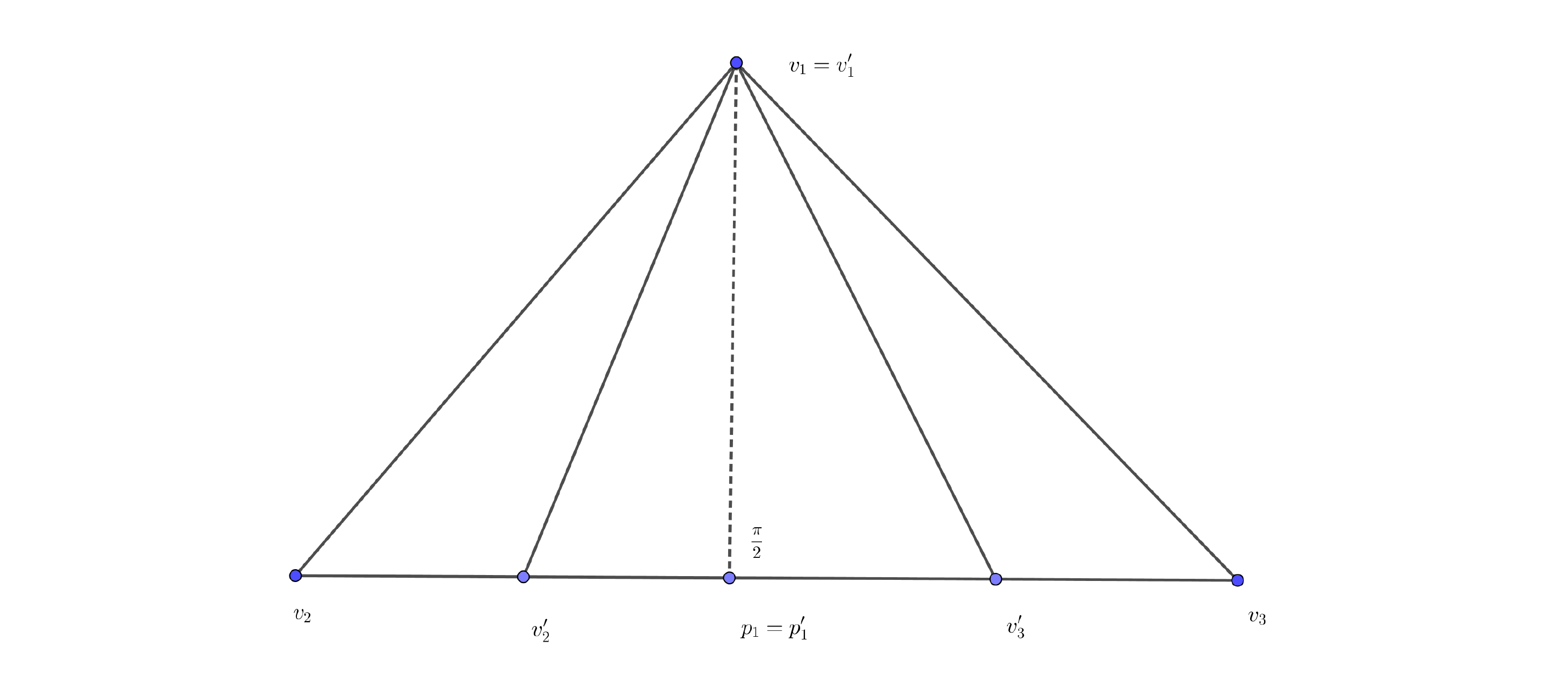}
	
	\caption{If the triangle $T'$ is  contained in the triangle $T'$ and $T$ and $T'$ have the same altitudes, then $\exp(L(T,T'))=1$.}
	\label{another-problem}
\end{figure}

\begin{proposition}
	Let $T$ and $T'$ be two acute triangles so that $\theta_2\leq \theta_2'$ and $\theta_3\leq \theta_3'$. Assume that $\lvert\lvert h_1\rvert \rvert =\lvert \lvert h_1'\rvert\rvert$. Then $\exp(L(T,T'))=1$ and there is a label-preserving homeomorphism $f:T\to T'$ so that $L(f)=1$.
\begin{proof}
We can move the triangle $T$ into $T'$ so that their altitudes coincide. See Figure \ref{another-problem}. Therefore we have two triangles  $T=\Delta v_1v_2v_3$ and $T'=\Delta v'_1v'_2v'_3$ so that $v_1=v'_1$ and $h_1=h'_1$. We want to find $L(T,T')$. Since $T$ and $T'$ have the same altitude from $v_1=v'_1$, we readily see that for any label-preserving homeomorphism $f:T\to T'$, $L(f)\geq1$. Now we construct a label-preserving homeomorphism $f: T\to T'$ so that $L(f)=1$;  this will complete the proofs of the two statements in the proposition.
Consider the homeomorphism $f$ as in Section \ref{right-triangles} which sends the triangle $\Delta v_1v_2p_1$ to the triangle 
$\Delta v_1v'_2p_1$. Also consider the homeomorphism $h$ as in Section \ref{right-triangles} which sends the triangle  $\Delta v_1 p_1 v_3$ to the triangle $\Delta v_1p_1v'_3$. Clearly $L(g)=L(h)=1$ and since $g$
and $h$ agree on the altitude $h_1=h'_1=[v_1,p_1]$, they induce a  homeomorphism $f:T\to T'$. Furthermore it is clear that $L(f)=1$.

\end{proof}

\end{proposition}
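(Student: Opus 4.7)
The plan is to combine Proposition \ref{esitsizlik} with an explicit construction based on the right-triangle case of Section \ref{right-triangles}. Since $\lvert\lvert h_1'\rvert\rvert/\lvert\lvert h_1\rvert\rvert = 1$, Proposition \ref{esitsizlik} immediately yields $\exp(L(T,T')) \geq 1$, so the whole task reduces to exhibiting a label-preserving homeomorphism $f\colon T\to T'$ with $L(f)=1$.

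After applying Euclidean isometries, I would place $T$ and $T'$ so that $v_1=v_1'$, the common altitude foot $p_1=p_1'$ lies directly below $v_1$ on a common line $\ell$, and $v_2, v_2'$ lie on one side of $p_1$ on $\ell$ while $v_3, v_3'$ lie on the other side (possible because acuteness forces each foot to be interior to the opposite edge). In the right triangle $\Delta v_1 p_1 v_2$ the identity $\tan\theta_2 = \lvert\lvert h_1\rvert\rvert /\lvert [p_1,v_2]\rvert$, the analogous identity for $T'$, and the hypothesis $\theta_2\leq\theta_2'$ give $\lvert[p_1,v_2']\rvert\leq\lvert[p_1,v_2]\rvert$ by monotonicity of $\tan$ on $(0,\pi/2)$; symmetrically $\lvert[p_1,v_3']\rvert\leq\lvert[p_1,v_3]\rvert$. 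Thus the common altitude splits $T$ into two right sub-triangles that each contain the corresponding right sub-triangle of $T'$.

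Choose coordinates so that $v_1$ is the origin, the altitude runs along the negative $y$-axis to $p_1$, and $\ell$ is the horizontal line through $p_1$. On the left half I apply the affine map of Section \ref{right-triangles}: acting on $\Delta v_1 p_1 v_2$, it fixes the vertical direction (ratio $\lvert\lvert h_1'\rvert\rvert/\lvert\lvert h_1\rvert\rvert=1$) and scales the horizontal direction by $\lambda=\lvert[p_1,v_2']\rvert/\lvert[p_1,v_2]\rvert\leq 1$, so its Lipschitz constant is $\max\{1,\lambda\}=1$. The analogous map on the right half has Lipschitz constant $1$ for the same reason. Both maps restrict to the identity on the altitude segment $[v_1,p_1]$, so they glue into a continuous, label-preserving homeomorphism $f\colon T\to T'$.

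The step that needs the most care is verifying $L(f)\leq 1$ for pairs $q_1=(x_1,y_1)$, $q_2=(x_2,y_2)$ lying on opposite sides of the altitude, since gluing two $1$-Lipschitz pieces along a common set is not automatically $1$-Lipschitz. For such a pair $x_1\leq 0\leq x_2$, and writing $f(q_i)=(\alpha_i x_i, y_i)$ with $0<\alpha_i\leq 1$, one has $|\alpha_2 x_2 - \alpha_1 x_1| = \alpha_2 x_2 + \alpha_1 (-x_1) \leq x_2 + (-x_1) = |x_2-x_1|$, hence $d_{euc}(f(q_1),f(q_2))\leq d_{euc}(q_1,q_2)$. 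Combined with the intra-half estimates, this gives $L(f)=1$, and together with the lower bound from Proposition \ref{esitsizlik} yields $\exp(L(T,T'))=1$.
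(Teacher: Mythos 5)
Your proof is correct and follows essentially the same route as the paper: align the two triangles along the common altitude from $v_1$, note that the angle hypotheses force each right sub-triangle of $T'$ into the corresponding right sub-triangle of $T$, apply the affine maps of Section \ref{right-triangles} on each half, and glue along the altitude, with the lower bound $\exp(L(T,T'))\geq \lvert\lvert h_1'\rvert\rvert/\lvert\lvert h_1\rvert\rvert = 1$ coming from Proposition \ref{esitsizlik}. Your explicit check that the glued map remains $1$-Lipschitz for pairs of points on opposite sides of the altitude is a welcome addition, since the paper dismisses this step as clear even though two $1$-Lipschitz pieces glued along a common set need not be $1$-Lipschitz in general.
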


\subsection{Acute triangles}

We are now ready to prove the main result on the space of acute triangles. It is an analogue, in the elementary case we are discussing,  of a result of Thurston in \cite{thurston} (see Corollary 8.5).

\begin{theorem}
	\label{acute-proof}
	If $T$ and $T'$ are two acute triangles, then
	
	$$L(T,T')=m(T,T').$$
	Furthermore, there exists a minimal Lipschitz map between the two acute triangles.
	
	\begin{proof}
		As already noted, we have $\exp(L(T,T'))\geq \exp(m(T,T'))$. See Proposition \ref{esitsizlik}. We will show that $\exp(L(T,T'))\leq \exp(m(T,T'))$. There exist distinct  $i,j \in \{1,2,3\}$ such that  either
		\begin{enumerate}
			\item 
			$\theta_i\leq \theta'_i$ and $\theta_j \leq \theta'_j$, or
			\item
			$\theta_i\geq \theta'_i$ and $\theta_j \geq \theta'_j$.
			
		\end{enumerate}
	
Without loss of generality we suppose that 	$\{i,j\}=\{2,3\}$.

	Assume that the first case holds, that is, $\theta_2 \leq \theta'_2$ and $\theta_3\leq \theta'_3$. We may scale the triangles $T$ and $T'$ so that they have altitudes of the same length from the vertex labeled by 1. That is, we may suppose that $\lvert\lvert h_1 \rvert\rvert=\lvert\lvert h_1'\rvert\rvert$. In that case the triangle $T'$ can be moved inside the triangle $T$ so that their altitudes coincide. See Figure \ref{another-problem}.
	Therefore there exists a homeomorphism $f:T\to T'$ such that $L(f)=1$. Hence we have
	
	$$\exp(L(T,T'))\leq L(f)=1\leq\exp( m(T,T')).$$
	
	\noindent This completes the proof of the claim for the first case.

	\begin{figure}
		\hspace*{-4cm} 
		\includegraphics[scale=0.7]{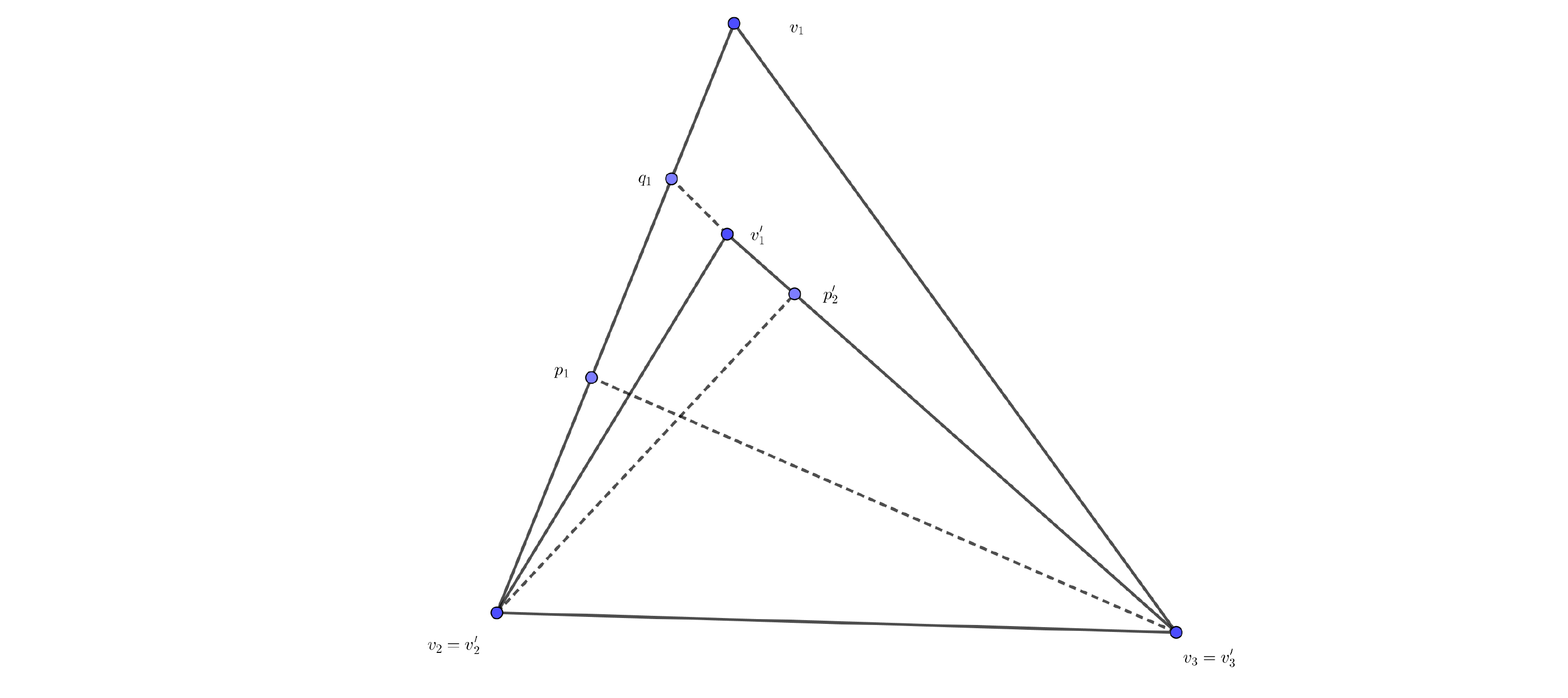}
		
		\caption{The case where $\theta_2\geq \theta'_2$ and $\theta_3\geq \theta'_3$}
		\label{iki}
	\end{figure}

	Assume now that the second case holds, that is, $\theta_2\geq \theta'_2$ and $\theta_3\geq \theta'_3$.  After scaling the triangle $T$ by $\frac{1}{\lvert\lvert e_1\rvert\rvert }$
	and the triangle $T'$ by $\frac{1}{\lvert\lvert e'_1\rvert \rvert}$, we may suppose that the lengths of $e_1$ and $e'_1$ are equal to 1. Also, we may assume that $T$ and $T'$ share an edge and two vertices, that is, $e_1=e'_1$, $v_2=v'_2$ and $v_3=v'_3$.
	It follows that $T$ contains $T'$. See Figure \ref{iki}. Consider the triangle $\Delta q_1v_2v_3$ where $q_1$ is the intersection of the line passing through the points $v'_1$ and $v_3$ with the line segment $[v_1,v_2]$. Consider the homeomorphism $g$ sending $\Delta v_1v_2v_3$ to $\Delta q_1v_2v_3$ which is the identity on the triangle $\Delta p_1v_2v_3$ and which maps the $\Delta v_1p_1v_3$ to the triangle $\Delta q_1p_1v_3$ as in the Section \ref{right-triangles}. Clearly $L(g)=1$. 
	
		Now consider the homeomorphism $h$ from $\Delta q_1v_2v_3$ to $\Delta v'_1v_2v_3$
	which is the identity on the triangle $\Delta p'_2v_2v_3$ and which sends $\Delta q_1v_2p'_2$ to $\Delta v'_1v_2p'_2$ as in Section \ref{right-triangles}. Clearly $L(h)=1$. 
	
Therefore $h\circ g$ is a homeomorphism between $T$ and $T'$ satisfying $L(h\circ g)=1$. We have 

	$$\exp(L(T,T'))\leq L(h\circ g)=1\leq \exp(m(T,T')).$$

	In particular $L(T,T')$ is given by an infimum which is attained by the map constructed 
	in the proof of Theorem \ref{acute-proof} when $T$
	and $T'$ are acute triangles. Thus, there exists a best Lipschitz map between two acute triangles.
\end{proof}
\end{theorem}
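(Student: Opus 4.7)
My plan is to prove the two inequalities separately. The direction $L(T,T') \ge m(T,T')$ is already Proposition \ref{esitsizlik}, so the entire task is to exhibit, for any pair of acute triangles, a label-preserving homeomorphism whose Lipschitz constant is at most $\exp(m(T,T'))$. By the scaling formulas (\ref{scaling-L}) and (\ref{scaling-m}), the equality $L(T,T') = m(T,T')$ is invariant under independent rescaling of $T$ and $T'$, so I am free to normalize at will. The key combinatorial point is a pigeonhole argument: since $\theta_1+\theta_2+\theta_3 = \theta'_1+\theta'_2+\theta'_3 = \pi$, at least two of the three pairwise comparisons $(\theta_i,\theta'_i)$ must point in the same direction. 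By permuting labels we can assume the two indices are $2$ and $3$, and the analysis splits into two symmetric cases.

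In the first case, $\theta_2 \le \theta'_2$ and $\theta_3 \le \theta'_3$, I normalize so that $\lVert h_1 \rVert = \lVert h'_1\rVert$. The hypothesis of the proposition in Section \ref{another-problem-right} is then exactly satisfied, so that result supplies a label-preserving homeomorphism $f:T\to T'$ with $L(f)=1$. With this normalization the ratio $\lVert h'_1\rVert/\lVert h_1\rVert$ equals $1$, so $\exp(m(T,T')) \ge 1$, and we obtain $\exp(L(T,T')) \le 1 \le \exp(m(T,T'))$, as required. In the second case, $\theta_2 \ge \theta'_2$ and $\theta_3 \ge \theta'_3$, I instead normalize so that $\lVert e_1\rVert = \lVert e'_1\rVert = 1$ and move $T'$ so that $e_1 = e'_1$, $v_2 = v'_2$, $v_3 = v'_3$. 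The angle comparisons force $\theta'_1 \ge \theta_1$, which places the apex $v'_1$ on the same side of $e_1$ as $v_1$ but closer to it, so $T'$ sits inside $T$. I then construct a label-preserving homeomorphism $T\to T'$ as a composition $h \circ g$ of two maps each built by piecing together two right-triangle affine maps of the type produced in Section \ref{right-triangles}, factored along the altitudes from $v_1$ and from $v'_1$ respectively; on each right sub-triangle the pairs of corresponding legs satisfy opposite compression ratios, so the affine stretches from Section \ref{right-triangles} each have Lipschitz constant $1$. Again $\lVert e'_1\rVert / \lVert e_1\rVert = 1$ forces $\exp(m(T,T'))\ge 1$, and we conclude $\exp(L(T,T')) \le L(h\circ g) = 1 \le \exp(m(T,T'))$.

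The main obstacle I expect is verifying that the piecewise constructions in both cases actually yield global Lipschitz constant $1$, rather than only piecewise constant $1$. Concretely, in Case B the two affine maps on the two right sub-triangles must agree on the separating altitude (which they do by construction, since each restricts to the identity there), and one must check that distances between points lying in different sub-triangles are not expanded. Since both sub-maps are the identity along the common altitude and compress along the transverse direction, this is a straightforward convexity/projection argument, and the same verification applies to the gluing used in Case A via Section \ref{another-problem-right}. Once both cases are established, the map we constructed explicitly realizes the infimum defining $L(T,T')$, which simultaneously proves the existence of a best Lipschitz map and completes the equality $L(T,T') = m(T,T')$.
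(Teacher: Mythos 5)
Your overall route is the same as the paper's: the pigeonhole on the angles, the reduction to the two cases for $\{i,j\}=\{2,3\}$, and Case~1 handled exactly as in the paper by normalizing $\lVert h_1\rVert=\lVert h'_1\rVert$ and invoking the proposition of Section~\ref{another-problem-right} (there the two altitude feet coincide, which is precisely why cutting along the common altitude works). The gap is in your Case~2. You propose to factor $g$ and $h$ along the altitudes from $v_1$ and from $v'_1$. After normalizing $\lVert e_1\rVert=\lVert e'_1\rVert=1$ and identifying the bases, the feet $p_1$ and $p'_1$ of these altitudes are in general \emph{different} points of $e_1$; since $\lvert[v_2,p_1]\rvert+\lvert[p_1,v_3]\rvert=\lvert[v_2,p'_1]\rvert+\lvert[p'_1,v_3]\rvert=1$, whenever $p_1\neq p'_1$ one of the ratios $\lvert[v_2,p'_1]\rvert/\lvert[v_2,p_1]\rvert$ or $\lvert[v_3,p'_1]\rvert/\lvert[v_3,p_1]\rvert$ exceeds $1$, so one of your right-triangle affine pieces stretches its base leg and is not $1$-Lipschitz. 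Concretely, $(\theta_1,\theta_2,\theta_3)=(40^\circ,80^\circ,60^\circ)$ and $(\theta'_1,\theta'_2,\theta'_3)=(75^\circ,50^\circ,55^\circ)$ satisfy $\theta_2\geq\theta'_2$ and $\theta_3\geq\theta'_3$, yet $\lvert[v_2,p'_1]\rvert/\lvert[v_2,p_1]\rvert\approx 2.3$. The phrase ``the pairs of corresponding legs satisfy opposite compression ratios, so each affine stretch has Lipschitz constant $1$'' is therefore not substantiated (if the two leg ratios really were reciprocal, their maximum would be $\geq 1$, not $\leq 1$), and your gluing remark that both sub-maps restrict to the identity on the separating altitude is also false for this decomposition, since $v_1\mapsto v'_1\neq v_1$.

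What makes Case~2 work in the paper is an intermediate triangle and a different choice of cutting lines: the apex is moved in two steps, each time \emph{along a line}, and the cut is the perpendicular dropped onto that line from the opposite vertex. First slide $v_1$ along the edge $[v_1,v_2]$ to the point $q_1$ where the line through $v'_1$ and $v_3$ meets $[v_1,v_2]$; the map is the identity on the piece of $T$ cut off by the perpendicular from $v_3$ to the line $v_1v_2$, and on the other piece it is a right-triangle affine map fixing that perpendicular leg and compressing the leg along $[v_1,v_2]$ (one checks $\theta_2+\theta'_3\geq\theta'_2+\theta'_3>\pi/2$, so the foot of this perpendicular lies between $v_2$ and $q_1$ and the motion is indeed a compression). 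Then slide $q_1$ along the line $q_1v_3=v'_1v_3$ to $v'_1$, cutting along the perpendicular from $v_2$ to that line. Each step fixes one leg and compresses the other, so each piece is $1$-Lipschitz, and the gluing argument you correctly describe (going through a point of the separating segment) finishes the job. Without this intermediate triangle your Case~2 does not close.
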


\begin{remark}
	\label{equal-area}
Assume that $T$ and $T'$ have the same area. Since 

$$\lvert\lvert e_i\rvert\rvert\cdot \lvert\lvert h_i\rvert\rvert=\lvert\lvert e'_i\rvert\rvert\cdot \lvert\lvert h'_i\rvert\rvert,$$

\noindent it follows that $$\frac{\lvert\lvert e_i\rvert\rvert}{\lvert\lvert e'_i\rvert\rvert}=\frac{\lvert\lvert h'_i\rvert\rvert}{\lvert\lvert h_i\rvert\rvert} \ \text{for any}\ i\in \{1,2,3\}.$$

Therefore $$m(T,T') =\log(\max\{\frac{\lvert\lvert e'_1 \rvert \rvert}{\lvert \lvert e_1 \rvert\rvert}, \frac{\lvert\lvert e'_2 \rvert \rvert}{\lvert \lvert e_2 \rvert\rvert},\frac{\lvert\lvert e'_3 \rvert \rvert}{\lvert \lvert e_3 \rvert\rvert},\frac{\lvert\lvert e_1 \rvert \rvert}{\lvert \lvert e'_1 \rvert\rvert},\frac{\lvert\lvert e_2 \rvert \rvert}{\lvert \lvert e'_2 \rvert\rvert},\frac{\lvert\lvert e_3 \rvert \rvert}{\lvert \lvert e'_3 \rvert\rvert}\})$$
$$=\max_i\{\lvert \log (\lvert\lvert e'_i\rvert\rvert)-\log(\lvert\lvert e_i \rvert\rvert)\rvert\}.$$ 

\end{remark}

\begin{remark} (The case of obtuse triangles.)
Consider Figure \ref{counter-example}. Let $T$ and $T'$ be the triangles $\Delta v_1 v_2 v_3$ and $\Delta v'_1v'_2v'_3$, respectively. Note that $v_2=v'_2$ and $v_3=v'_3$. We will show that $L(T,T')>m(T,T')$. Indeed it is easy to see that 

$$\exp(m(T,T'))=\frac{\lvert \lvert h'_1 \rvert \rvert}{\lvert \lvert h_1 \rvert \rvert}=\frac{3}{1}=3.$$  

Consider the point $p_1$ which is the intersection of the altitude from the vertex $v_1$ and the edge $e_1$. If $f$ is any label-preserving  homeomorphism from $T$ to $T'$. Then $f(p_1)$ is on the interior of the edge $e_1$. It follows that 

$$d_{euc}(f(p_1),f(v_1))=d_{euc}(f(p_1),v'_1)>d_{euc}(v_2,v_1)=2\sqrt{3}.$$
 Hence \begin{equation}
	\label{not-attain}
	L(f)> \frac{d_{euc}(f(p_1),f(v_1))}{d_{euc}(p_1,v_1)}=2\sqrt{3}.
	\end{equation}
Thus it follows that $\exp(L(T,T'))\geq 2\sqrt{3}>3=\exp(m(T,T'))$. Note that one can scale the triangles $T$ and $T'$ to get  $\lambda T$ and $\lambda'T'$ so that they have same area. In that case we  have $L(\lambda T, \lambda'T')>m(\lambda T, \lambda' T')$.
\end{remark}

\begin{figure}
	\hspace*{-1.3cm} 
	\includegraphics[scale=0.92]{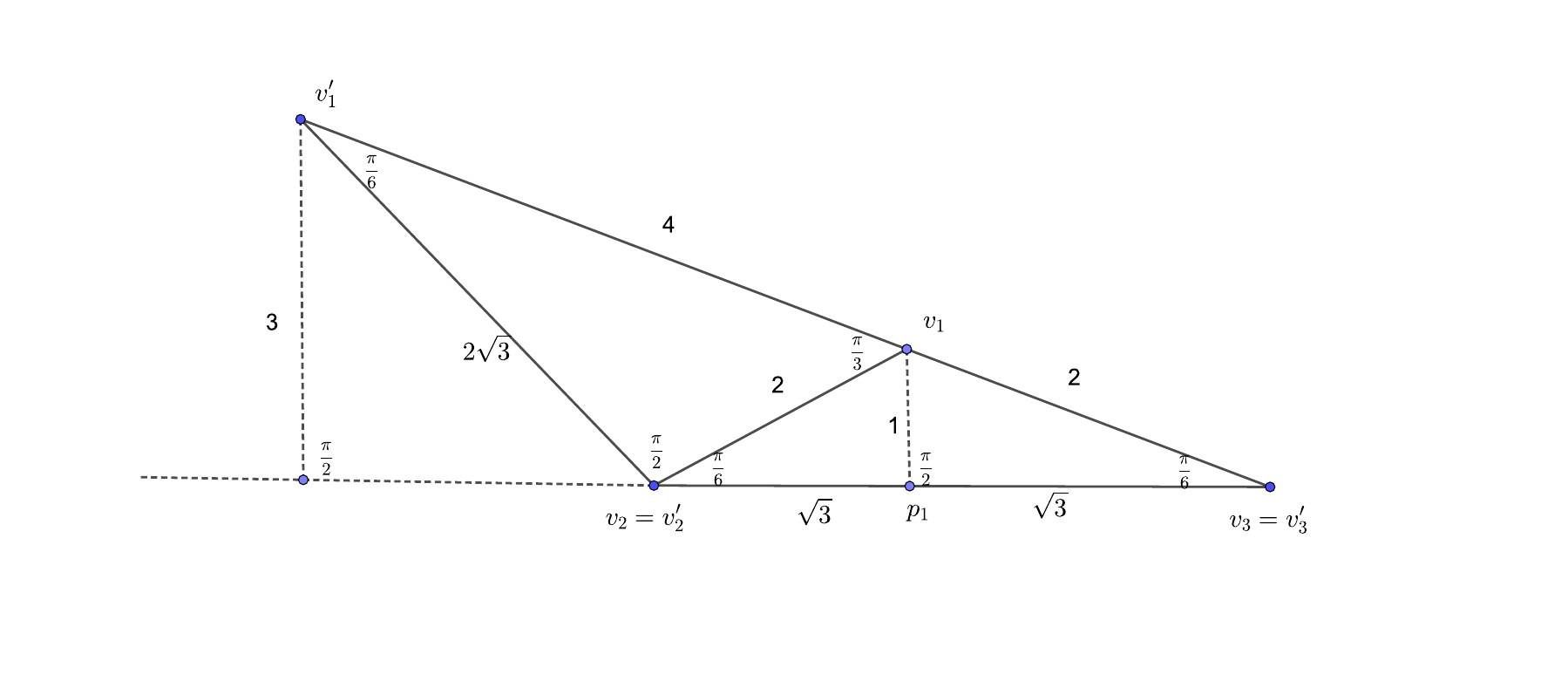}
	
	\caption{Two triangles $T$ and $T'$ so that $L(T,T')>m(T,T')$.}
	\label{counter-example}
\end{figure}

	

\subsection{Some Facts about Two Non-obtuse Triangles Sharing an Edge}

\label{a-remark-about}
Let $T$ be a triangle with vertices $v_1,v_2,v_3$ and $T'$ be a triangle with vertices $v'_1,v'_2,v'_3$ such that $v_1=v'_1$ and $v_2=v'_2$. Suppose that $T$ and $T'$ are non-obtuse and $T$ is contained in $T'$, as in Figure \ref{istek}. 
We claim that 

$$\exp(m(T,T'))=\frac{\lvert\lvert h'_3 \rvert \rvert}{\lvert\lvert h_3 \rvert \rvert}.$$

\begin{figure}
	\hspace*{-4cm} 
	\includegraphics[scale=0.65]{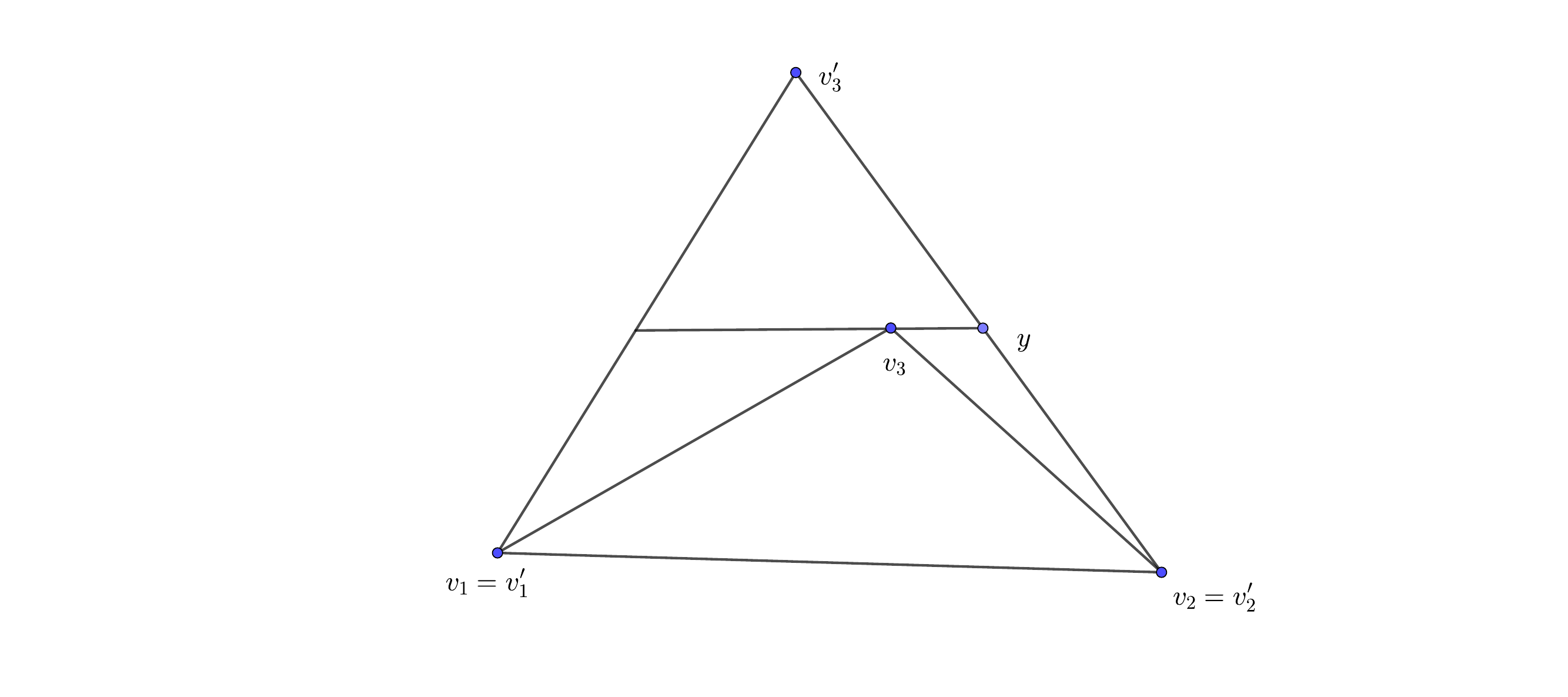}
	
	\caption{}
	\label{istek}
\end{figure}

 Consider the line which passes through $v_3$ and which is parallel to the line passing through $v_1$ and $v_2$. Let $y$ be the point of intersection of this line with the edge $e'_1$. Then we have

$$\frac{\lvert\lvert h'_3 \rvert \rvert}{\lvert\lvert h_3 \rvert \rvert}=\frac{\lvert\lvert e'_1\rvert\rvert}{\lvert\lvert [v_2,y]\rvert\rvert}\geq\frac{\lvert\lvert e'_1\rvert\rvert}{\lvert\lvert e_1\rvert\rvert}.$$

\noindent Similarly 

$$\frac{\lvert\lvert h'_3 \rvert \rvert}{\lvert\lvert h_3 \rvert \rvert}\geq\frac{\lvert\lvert e'_2\rvert\rvert}{\lvert\lvert e_2\rvert\rvert}.$$

\noindent Now we show that 
$$\frac{\lvert\lvert h'_3 \rvert \rvert}{\lvert\lvert h_3 \rvert \rvert}\geq\frac{\lvert\lvert h'_1\rvert\rvert}{\lvert\lvert h_1\rvert\rvert}.$$

\noindent We have 
$$\frac{\lvert\lvert h'_3 \rvert \rvert}{\lvert\lvert h_3 \rvert \rvert}=\frac{\mathrm{Area}(T')}{\mathrm{Area}(T)}=\frac{\lvert\lvert h'_1 \rvert \rvert}{\lvert\lvert h_1 \rvert \rvert}\frac{\lvert\lvert e'_1\rvert\rvert}{\lvert\lvert e_1\rvert\rvert}.$$
\noindent Since $\frac{\lvert\lvert e'_1\rvert\rvert}{\lvert\lvert e_1\rvert\rvert}\geq 1$, we have $\frac{\lvert\lvert h'_3 \rvert \rvert}{\lvert\lvert h_3 \rvert \rvert}\geq\frac{\lvert\lvert h'_1\rvert\rvert}{\lvert\lvert h_1\rvert\rvert}$. Similarly, $\frac{\lvert\lvert h'_3 \rvert \rvert}{\lvert\lvert h_3 \rvert \rvert}\geq\frac{\lvert\lvert h'_2\rvert\rvert}{\lvert\lvert h_2\rvert\rvert}.
$
Since the other arguments of $\exp(m(T,T'))$ are less than or equal to 1, we have 

$$\exp(m(T,T'))=\frac{\lvert\lvert h'_3 \rvert \rvert}{\lvert\lvert h_3 \rvert \rvert}.$$

Now we want to scale $T$ so that the new triangle and $T'$ have the same area. Clearly we need to scale $T$ by the factor $\lambda=\frac{\sqrt{\lvert\lvert h'_3\lvert\lvert}}{\sqrt{\lvert\lvert h_3 \rvert \rvert}}$. Let $T''=\lambda T$. Then Equality \ref{scaling-m} implies that 
$$\exp(m(T'',T'))=\lambda=\frac{\sqrt{\lvert\lvert h'_3\lvert\lvert}}{\sqrt{\lvert\lvert h_3 \rvert \rvert}}=\frac{\lvert\lvert e_3''\rvert\rvert}{\lvert\lvert e_3'\rvert\rvert},$$

\noindent where $e_3''$ is the edge of $T''$ which is opposite the vertex with label 3, $v_3''$. Let us summarize the above discussion as a lemma.

\begin{lemma}
	\label{crucial}
	Let $T$ and $T'$ be labeled non-obtuse triangles having the same area. There exists $i\in \{1,2,3\}$ such that 
	 \begin{enumerate}
	 	\item 
	 	$\theta_j\leq \theta'_j$ and $\theta_k \leq \theta'_k$, or
	 	\item
	 	$\theta_j\geq \theta'_j$ and $\theta_k \geq \theta'_k$,
	 	
	 \end{enumerate}
 \noindent where $j,k\in \{1,2,3\}$, $j,k\neq i$. Then, we have
 $$\exp(m(T,T'))=\max\{\frac{\lvert\lvert e_i\rvert\rvert}{\lvert\lvert e_i'\rvert\rvert},\frac{\lvert\lvert e_i'\rvert\rvert}{\lvert\lvert e_i\lvert\lvert}\},$$
 
\noindent or equivalently,

$$ m(T,T')=\lvert \log(\lvert\lvert e_i'\rvert\rvert)-\log(\lvert\lvert e_i\rvert\rvert)\rvert.$$

\noindent More precisely,

\begin{enumerate}
	\item 
	If 	 	$\theta_j\leq \theta'_j$ and $\theta_k \leq \theta'_k$ then
	\begin{equation}
		\label{normalized1}
	\exp(m(T,T'))=\frac{\lvert\lvert e_i\rvert\rvert}{\lvert\lvert e_i'\rvert\rvert},
	\end{equation}
	\item
	and if $\theta_j\geq \theta'_j$ and $\theta_k \geq \theta'_k$ then
	\begin{equation}
		\label{normalized2}
	\exp(m(T,T'))=\frac{\lvert\lvert e_i'\rvert\rvert}{\lvert\lvert e_i\rvert\rvert}.
	\end{equation}
\end{enumerate}
\end{lemma}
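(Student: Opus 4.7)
The plan is to derive Lemma \ref{crucial} from the computation in Section \ref{a-remark-about} preceding it, via a scaling argument that reduces to the shared-edge configuration treated there.

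First I would establish the existence of $i$. Setting $\delta_l := \theta'_l - \theta_l$ for $l \in \{1,2,3\}$, the identity $\sum_l \theta_l = \sum_l \theta'_l = \pi$ gives $\delta_1 + \delta_2 + \delta_3 = 0$. Among any three real numbers summing to zero, at least two must share a common non-strict sign (a short case check on how many of them are zero), so I designate these two as $j, k$ and put $i$ equal to the remaining index. According as the common sign is $\geq 0$ or $\leq 0$, we fall into case (1) or case (2); since these two cases are interchanged by swapping $T$ and $T'$ (which also interchanges formulas (\ref{normalized1}) and (\ref{normalized2})), I restrict henceforth to case (1): $\theta_j \leq \theta'_j$, $\theta_k \leq \theta'_k$, and hence $\theta_i \geq \theta'_i$.

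Next I would rescale $T'$ to $T^{*} := \mu T'$ with $\mu := \lvert\lvert e_i \rvert\rvert / \lvert\lvert e'_i \rvert\rvert$, producing a labeled triangle similar to $T'$ but with $\lvert\lvert e^{*}_i \rvert\rvert = \lvert\lvert e_i \rvert\rvert$. By an ambient isometry, place $T$ and $T^{*}$ so that they share the edge opposite the vertex labeled $i$ (that is, $v_j = v^{*}_j$ and $v_k = v^{*}_k$), with both apices on the same side of this edge. Since $T^{*}$ has the same angles as $T'$, the base angles satisfy $\theta^{*}_j = \theta'_j \geq \theta_j$ and $\theta^{*}_k = \theta'_k \geq \theta_k$. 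A short coordinate computation, comparing at each base vertex the slope of the leg of $T$ with that of the leg of $T^{*}$, then shows that these inequalities force the apex $v_i$ to lie inside $T^{*}$, hence $T \subseteq T^{*}$. The discussion of Section \ref{a-remark-about}, applied with the shared edge treated as opposite the index $i$, yields
$$\exp(m(T, T^{*})) = \frac{\lvert\lvert h^{*}_i \rvert\rvert}{\lvert\lvert h_i \rvert\rvert}.$$

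To pass back to $T'$, I would use the scaling identity (\ref{scaling-m}) together with $\lvert\lvert h^{*}_i \rvert\rvert = \mu \lvert\lvert h'_i \rvert\rvert$ to obtain
$$\exp(m(T, T')) = \frac{1}{\mu} \exp(m(T, T^{*})) = \frac{\lvert\lvert h'_i \rvert\rvert}{\lvert\lvert h_i \rvert\rvert},$$
and the equal-area identity $\lvert\lvert e_i \rvert\rvert \cdot \lvert\lvert h_i \rvert\rvert = \lvert\lvert e'_i \rvert\rvert \cdot \lvert\lvert h'_i \rvert\rvert$ converts the right-hand side into $\lvert\lvert e_i \rvert\rvert / \lvert\lvert e'_i \rvert\rvert$. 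Since $T \subseteq T^{*}$ forces $\mu \geq 1$, and thus $\lvert\lvert e_i \rvert\rvert \geq \lvert\lvert e'_i \rvert\rvert$, this quotient realises the maximum in the displayed $\max$, establishing (\ref{normalized1}); case (2) follows from case (1) by the symmetry noted above. I expect the main obstacle to be the geometric verification that the two angle inequalities at the shared base force the containment $T \subseteq T^{*}$; once that is in hand, everything else is bookkeeping with the scaling formulas and the preceding discussion.
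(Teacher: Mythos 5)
Your argument is correct and is essentially the paper's own: the paper proves the lemma by exactly the reduction you describe, namely scaling one triangle so that the two share the edge opposite $v_i$ with one nested inside the other, invoking the computation $\exp(m)=\lvert\lvert h'_i\rvert\rvert/\lvert\lvert h_i\rvert\rvert$ from Section \ref{a-remark-about}, and then converting back via the scaling identity (\ref{scaling-m}) and the equal-area relation $\lvert\lvert e_i\rvert\rvert\cdot\lvert\lvert h_i\rvert\rvert=\lvert\lvert e'_i\rvert\rvert\cdot\lvert\lvert h'_i\rvert\rvert$. Your explicit sign argument for the existence of $i$ and the slope check for the containment $T\subseteq T^{*}$ are steps the paper leaves implicit, but they do not change the route.
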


\begin{lemma}\label{lem:two}
	Let $T$ and $T'$ be two labeled triangles of the same area with angles $(\theta_1,\theta_2,\theta_3)$
	and $(\theta_1',\theta_2',\theta_3')$. Let $\{i,j,k\}=\{1,2,3\}$.
	
	\begin{enumerate}
		\item 
	If $\theta_i>\theta_i'$, $\theta_j<\theta_j'$ and $\theta_k\leq \theta_k'$, then
	
	$$\exp(m(T,T'))=\frac{\lvert\lvert e_i\rvert \rvert}{\lvert \lvert e_i'\rvert\rvert}>\max\{\frac{\lvert\lvert e_k\rvert\rvert}{\lvert\lvert e_k'\rvert\rvert},\frac{\lvert\lvert e_k'\rvert\rvert}{\lvert\lvert e_k\rvert\rvert}\}.$$
	\item
	If $\theta_i>\theta_i'$, $\theta_j<\theta_j'$ and $\theta_k>\theta_k'$, then
	
	$$\exp(m(T,T'))=\frac{\lvert\lvert e_j'\rvert \rvert}{\lvert \lvert e_j\rvert\rvert}>\max\{\frac{\lvert\lvert e_k\rvert\rvert}{\lvert\lvert e_k'\rvert\rvert},\frac{\lvert\lvert e_k'\rvert\rvert}{\lvert\lvert e_k\rvert\rvert}\}.$$	
	\end{enumerate}

\begin{proof}Without loss of generality we will assume that $i=1,j=2,k=3$.
	\begin{enumerate}
		\item 	By Lemma \ref{crucial}, we know that $\exp(m(T,T'))=\frac{\lvert\lvert e_1\rvert \rvert}{\lvert \lvert e_1'\rvert\rvert}$, and clearly $\exp(m(T,T'))>1$. This means that 
		$\frac{\lvert\lvert e_1\rvert \rvert}{\lvert \lvert e_1'\rvert\rvert}\geq \max\{\frac{\lvert\lvert e_3\rvert\rvert}{\lvert\lvert e_3'\rvert\rvert},\frac{\lvert\lvert e_3'\rvert\rvert}{\lvert\lvert e_3\rvert\rvert}\}$. Assume that $\frac{\lvert\lvert e_3\rvert\rvert}{\lvert\lvert e_3'\rvert\rvert}=\frac{\lvert\lvert e_1\rvert \rvert}{\lvert \lvert e_1'\rvert\rvert}$ or $\frac{\lvert\lvert e_3'\rvert\rvert}{\lvert\lvert e_3'\rvert\rvert}=\frac{\lvert\lvert e_1\rvert \rvert}{\lvert \lvert e_1'\rvert\rvert}$. Then it follows that
		
		$$\lvert\lvert e_1'\rvert\rvert \cdot \lvert\lvert e_3'\rvert\rvert\sin\theta_2'>\lvert\lvert e_1\rvert\rvert\cdot\lvert\lvert e_3\rvert\rvert\sin\theta_2,$$
		
		\noindent since $\theta<\theta_2'<\frac{\pi}{2}$. This is a contradiction since $T$ and $T'$ have the same area. 
\item
We know that 

$$1< \exp(m(T,T'))=\frac{\lvert\lvert e_2'\rvert \rvert}{\lvert \lvert e_2\rvert\rvert}\geq\max\{\frac{\lvert\lvert e_3\rvert\rvert}{\lvert\lvert e_3'\rvert\rvert},\frac{\lvert\lvert e_3'\rvert\rvert}{\lvert\lvert e_3\rvert\rvert}\}.$$

\noindent If $\frac{\lvert\lvert e_2'\rvert \rvert}{\lvert \lvert e_2\rvert\rvert}= \frac{\lvert\lvert e_3\rvert \rvert}{\lvert \lvert e_3'\rvert\rvert}$, then

	$$\lvert\lvert e_2\rvert\rvert\cdot\lvert\lvert e_3\rvert\rvert\sin\theta_1>\lvert\lvert e_2'\rvert\rvert\cdot\lvert\lvert e_3'\rvert\rvert\sin\theta_1',$$
	\noindent which is a contradiction. If $\frac{\lvert\lvert e_2'\rvert \rvert}{\lvert \lvert e_2\rvert\rvert}= \frac{\lvert\lvert e_3'\rvert \rvert}{\lvert \lvert e_3\rvert\rvert}$, then  we have 
		$$\lvert\lvert e_1'\rvert\rvert\lvert\lvert e_3'\rvert\rvert\sin\theta_2'=\lvert\lvert e_1\rvert\rvert\lvert\lvert e_3\rvert\rvert\sin\theta_2.$$
		
		\noindent 
	 Thus 
	 $$\frac{\lvert\lvert e_1 \lvert \lvert }{\lvert\lvert e_1'\rvert\rvert}=\frac{\lvert\lvert e_3' \lvert \lvert }{\lvert\lvert e_3\rvert\rvert}\frac{\sin \theta_2'}{\sin \theta_2}>\frac{\lvert\lvert e_3' \lvert \lvert }{\lvert\lvert e_3\rvert\rvert}=\frac{\lvert\lvert e_2' \lvert \lvert }{\lvert\lvert e_2\rvert\rvert}=\exp(m(T,T')),$$
	 
	 \noindent which is impossible. This completes the proof of the claim.
	
	\end{enumerate}

\begin{figure}
	\hspace*{-2cm} 
	\includegraphics[scale=1.2]{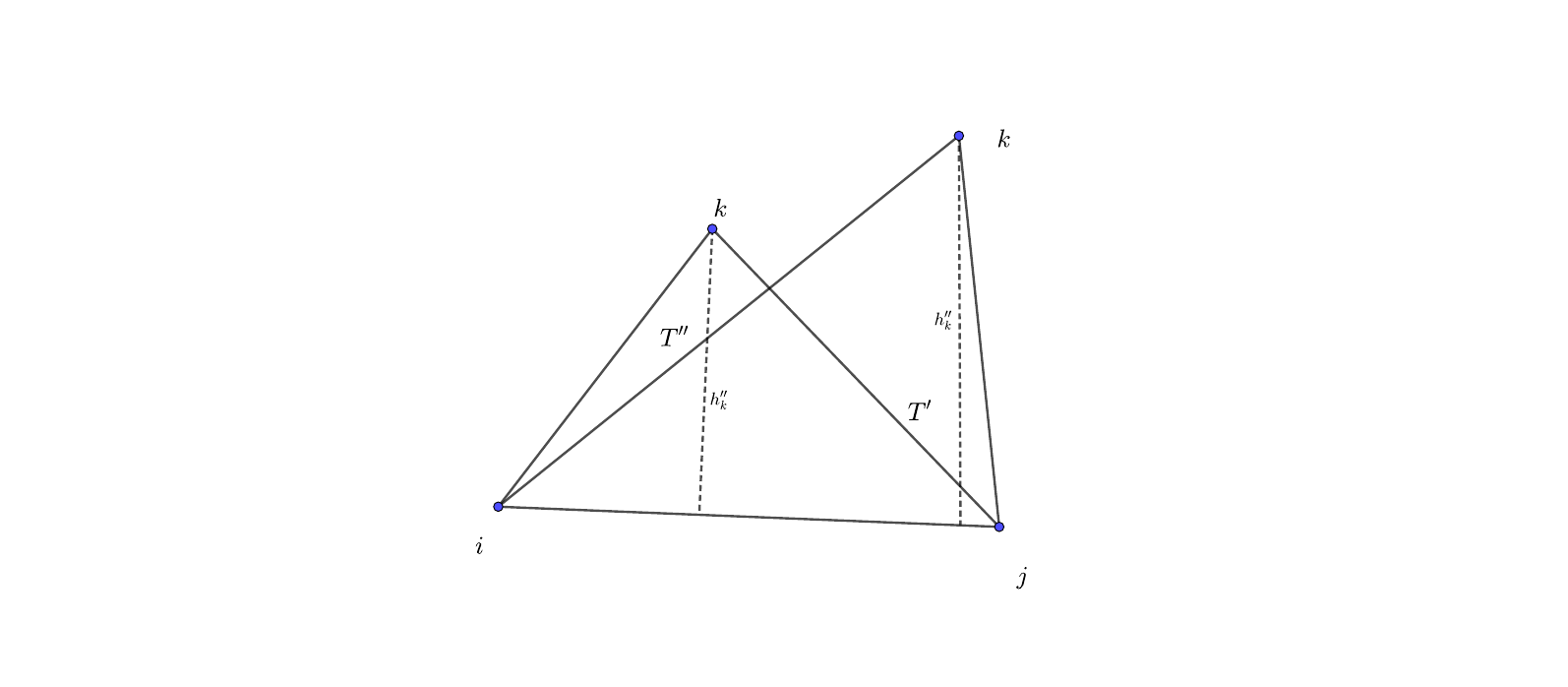}
	
	\caption{Two triangles sharing an edge but one is not contained in the other.}
	\label{crucial-2-fig}
\end{figure}
\end{proof}
\end{lemma}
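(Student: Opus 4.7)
My plan is, by the symmetry of the statement in the three indices, to reduce to the case $(i,j,k)=(1,2,3)$ and handle the two cases in parallel. In each case the value of $\exp(m(T,T'))$ falls straight out of Lemma \ref{crucial}: in Case (1) the two hypotheses $\theta_2<\theta_2'$ and $\theta_3\le\theta_3'$ point the same way, so Lemma \ref{crucial}(1), with its exceptional index taken to be $1$, yields $\exp(m(T,T'))=\lvert\lvert e_1\rvert\rvert/\lvert\lvert e_1'\rvert\rvert$; in Case (2) the hypotheses $\theta_1>\theta_1'$ and $\theta_3>\theta_3'$ point the same way, so Lemma \ref{crucial}(2), with its exceptional index taken to be $2$, yields $\exp(m(T,T'))=\lvert\lvert e_2'\rvert\rvert/\lvert\lvert e_2\rvert\rvert$. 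So the equality parts of the two assertions are free, and all the work lies in the strict inequality.

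For that inequality I would combine two elementary identities. The law of sines applied inside each triangle separately gives $\lvert\lvert e_a\rvert\rvert/\lvert\lvert e_b\rvert\rvert=\sin\theta_a/\sin\theta_b$, which rearranges into
$$\frac{\lvert\lvert e_a\rvert\rvert/\lvert\lvert e_a'\rvert\rvert}{\lvert\lvert e_b\rvert\rvert/\lvert\lvert e_b'\rvert\rvert}=\frac{\sin\theta_a\,\sin\theta_b'}{\sin\theta_a'\,\sin\theta_b},$$
while the equal-area hypothesis $\lvert\lvert e_a\rvert\rvert\,\lvert\lvert e_b\rvert\rvert\sin\theta_c=2A=\lvert\lvert e_a'\rvert\rvert\,\lvert\lvert e_b'\rvert\rvert\sin\theta_c'$ rearranges into
$$\frac{\lvert\lvert e_a\rvert\rvert\,\lvert\lvert e_b\rvert\rvert}{\lvert\lvert e_a'\rvert\rvert\,\lvert\lvert e_b'\rvert\rvert}=\frac{\sin\theta_c'}{\sin\theta_c}.$$
The first identity is the natural tool for comparing the maximizing ratio with $\lvert\lvert e_k\rvert\rvert/\lvert\lvert e_k'\rvert\rvert$, and the second for comparing it with the reciprocal $\lvert\lvert e_k'\rvert\rvert/\lvert\lvert e_k\rvert\rvert$. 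Concretely, in Case (1) the first identity with $(a,b)=(1,3)$ gives $\lvert\lvert e_1\rvert\rvert/\lvert\lvert e_1'\rvert\rvert > \lvert\lvert e_3\rvert\rvert/\lvert\lvert e_3'\rvert\rvert$ because $\sin\theta_1>\sin\theta_1'$ and $\sin\theta_3'\ge \sin\theta_3$, while the second identity with $c=2$ gives $\lvert\lvert e_1\rvert\rvert\,\lvert\lvert e_3\rvert\rvert/(\lvert\lvert e_1'\rvert\rvert\,\lvert\lvert e_3'\rvert\rvert)=\sin\theta_2'/\sin\theta_2>1$, i.e.\ $\lvert\lvert e_1\rvert\rvert/\lvert\lvert e_1'\rvert\rvert > \lvert\lvert e_3'\rvert\rvert/\lvert\lvert e_3\rvert\rvert$. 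Case (2) is handled in exactly the same fashion, applying the first identity with $(a,b)=(2,3)$ and the second with $c=1$.

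The only subtlety, and the reason every one of these inequalities is strict rather than weak, is that $\sin$ is strictly increasing on $[0,\pi/2]$; this rests on both triangles being non-obtuse, which is the setting already required by Lemma \ref{crucial}. In each subcase at least one of the two sine comparisons driving the inequality is strict, and this is guaranteed by the strict hypotheses $\theta_i>\theta_i'$ and $\theta_j<\theta_j'$ in the statement. The main thing to be careful about is choosing the correct pair $(a,b)$ and the correct third index $c$ in each subcase, so that the two sine inequalities one invokes both line up in the right direction.
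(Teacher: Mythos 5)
Your proposal is correct and follows essentially the same route as the paper: the equality is read off from Lemma \ref{crucial}, and the strict inequality comes from combining the equal-area identity with the law of sines and the strict monotonicity of $\sin$ on $[0,\pi/2]$. The only difference is organizational --- you establish the strict inequalities directly via a ratio-of-ratios computation, whereas the paper assumes equality of the relevant ratios and derives a contradiction from the same two identities.
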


\begin{remark}
	Let $T$ and $T'$ be two labeled triangles of equal area so that $\theta_i>\theta_i'$ and $\theta_j<\theta_j'$. Scale $T$ to get a new triangle $T''$ so that $\lvert\lvert e_k''\rvert\rvert=\lvert\lvert e_k'\rvert\rvert$. See Figure \ref{crucial-2-fig}.
	Then it follows from the discussion before Lemma \ref{crucial} that
	
	$$\max\{\frac{\lvert\lvert e_k\rvert\rvert}{\lvert\lvert e_k'\rvert\rvert},\frac{\lvert\lvert e_k'\rvert\rvert}{\lvert\lvert e_k\rvert\rvert}\}=\max\{\sqrt{\frac{\lvert\lvert h_3''\rvert\rvert}{\lvert\lvert h_3'\rvert\rvert}},\sqrt{\frac{\lvert\lvert h_3'\rvert\rvert}{\lvert\lvert h_3''\rvert\rvert}} \}$$
	
	\noindent Thus Lemma \ref{lem:two} implies that 
	
	$$\exp(m(T,T'))>\max\{\sqrt{\frac{\lvert\lvert h_3''\rvert\rvert}{\lvert\lvert h_3'\rvert\rvert}},\sqrt{\frac{\lvert\lvert h_3'\rvert\rvert}{\lvert\lvert h_3''\rvert\rvert}} \}.$$
\end{remark}

\section{The space of triangles}
\label{section-space-triangles}
In this section we will consider the set of isometry classes of triangles. First we define the notion of a metric. The definition we give is different from the usual definition of a metric since we drop the symmetry axiom.

\begin{definition}
	A metric on a set $X$ is a function $\eta:X\times X \to \R$ such that
	
\begin{itemize}
	\item 
	$\eta(x,x)=0$ for all $x\in X$,
	\item
	$\eta(x,y)>0$ if $x\neq y$,
	\item
	$\eta(x,y)+\eta(y,z)\geq \eta(x,z)$ for all $x,y,z \in X$.
	\end{itemize}
The pair $(X,d)$ or the set $X$ is called a metric space.
If $\eta(x,y)=\eta(y,x)$ for all $x,y \in X$, then the metric is called symmetric. Otherwise, the metric is called asymmetric.

\end{definition}

For a fixed $A>0$, let $\frak{T}_{A}$ be the set of equivalence classes of labeled triangles with area $A$, where two triangles are equivalent if there is an isometry between them which respects the labeling.  

We show that $L$ defines a metric on $\frak{T}_{A}$. If $T$ is a labeled triangle with area $A$, then we denote its equivalence class in $\frak{T}_{A}$ as $[T]$. It is clear that $L$ gives a well-defined function on $\frak{T}_{A}\times \frak{T}_{A}$. We denote this function by $L$ as well.

The proof of the following lemma is similar to that of Proposition 2.1 of \cite{thurston}.

\begin{lemma}
\label{thurston-type}
	Let $T$ and $T'$ be two labeled triangles of equal area. If $L(T,T')\leq 0$, then $L(T,T')=0$ and $T$ and $T'$ are isometric.
	\begin{proof}
		For any $\lambda \geq 0$ the set of $\lambda$-Lipschitz maps from $T$ to $T'$ which respect the labeling 
		is equicontinuous. Suppose that  $L(T,T')\leq 0$. Then we can pick a map $f: T \to T'$ which has minimum global Lipschitz constant $e^{L(T,T')}\leq 1$.
		
		 If $L(T,T')< 0$, we can assume that  $f$ is a homeomorphism and that its Lipschitz constant (which may be larger than $e^{L(T,T')}$) is $< 1$. But then we can cover the surface $T$ by a countable family of discs of different radii such that the interiors of these discs are disjoint and their complement has measure zero, such that $f$ maps one of these discs to a disc of strictly smaller radius. This is impossible since $T$ and $T'$ have the same area. Thus, $L(T,T')=0$. 
		 
		 Now taking a covering of the surface $T$ by a countable family of discs of different radii such that the interiors of these discs are disjoint and their complement has measure zero, we conclude,  since the Lipschitz constant of this map is 1, that each such disc is mapped by $f$ surjectively onto a disc of the same radius. Repeating the same argument with a covering of  $T$ by discs whose radii tend uniformly to zero, we see that $f$ is an isometry.
\end{proof}
\end{lemma}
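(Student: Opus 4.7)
\medskip
\noindent\textbf{Proof sketch.} The plan is to attain the infimum defining $L(T,T')$ by an Arzel\`a--Ascoli argument, to rule out the strict inequality $L(T,T')<0$ by comparing areas of images of discs, and then to promote the resulting $1$-Lipschitz minimizer to an isometry by examining the equality case of the same area comparison.

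First, I would observe that for any fixed $\lambda\ge 0$ the family of label-preserving $\lambda$-Lipschitz maps $T\to T'$ is equicontinuous, and it is pointwise bounded because $T'$ is compact. Applied to a minimizing sequence of label-preserving homeomorphisms, the Arzel\`a--Ascoli theorem produces a uniform subsequential limit $f:T\to T'$ that is label-preserving and has Lipschitz constant exactly $\exp(L(T,T'))$.

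Next, assuming $L(T,T')<0$ for contradiction, I would pick from the minimizing sequence a label-preserving homeomorphism $g:T\to T'$ with Lipschitz constant $\lambda<1$. I cover $T$, up to a set of measure zero, by countably many pairwise disjoint open discs $B(x_i,r_i)$; each image $g(B(x_i,r_i))$ sits inside $B(g(x_i),\lambda r_i)$, and since $g$ is surjective the image discs cover $T'$ up to measure zero. Summing areas,
\[
\mathrm{Area}(T')\le \sum_i \pi(\lambda r_i)^2=\lambda^2\,\mathrm{Area}(T),
\]
which contradicts the hypothesis $\mathrm{Area}(T)=\mathrm{Area}(T')$. Therefore $L(T,T')=0$ and the Arzel\`a--Ascoli limit $f$ is $1$-Lipschitz.

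For the isometry claim I would run the same disc-cover argument on $f$ itself. Each inclusion $f(B(x_i,r_i))\subset B(f(x_i),r_i)$ now combines with the equal-area identity into an equality throughout, forcing every such inclusion to be an equality of sets up to measure zero. Refining through families whose radii tend uniformly to zero, I conclude that $d_{euc}(f(x),f(y))=d_{euc}(x,y)$ for every $x,y\in T$, so $f$ is an isometry. The main delicacy I anticipate is that the Arzel\`a--Ascoli limit is not a priori a homeomorphism, only continuous and $1$-Lipschitz; the equality case of the area argument is what recovers its bijectivity, and the strict-contraction contradiction is to be run with a nearby genuine homeomorphism from the minimizing sequence rather than with the limit itself.
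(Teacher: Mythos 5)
Your proposal is correct and follows essentially the same route as the paper's own proof: an equicontinuity/Arzel\`a--Ascoli argument to realize the infimum, a disjoint disc covering of full measure to contradict $L(T,T')<0$ via area comparison, and the equality case of that comparison (with radii shrinking to zero) to upgrade the $1$-Lipschitz minimizer to an isometry. The only difference is that you spell out more explicitly the points the paper leaves implicit, such as the surjectivity of the image discs and the need to run the strict-contraction argument on a genuine homeomorphism from the minimizing sequence rather than on the limit.
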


\noindent Note that Lemma \ref{thurston-type} implies that for any $[T],[T']\in \frak{T}_{A}$ we have $L([T],[T'])\geq 0$ and $L([T],[T'])=0$ if and only if $[T]=[T']$.

\begin{lemma}
	\label{triangle-L}
	If $T,T',T''$ are three labeled triangles of the same area, then 
	$$L(T,T')+L(T',T'')\geq L(T,T'').$$
	
	\begin{proof}
		Let $f:T\to T'$ and $g:T'\to T'' $ be  label-preserving homeomorphisms. The assertion follows from the fact that
		$$L(g\circ f)\leq L(g)L(f).$$
		 
	\end{proof}
	
\end{lemma}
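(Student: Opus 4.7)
The plan is to use the multiplicativity of Lipschitz constants under composition, together with the definition of $L(T,T')$ as an infimum over all label-preserving homeomorphisms.

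First, I would fix any $\epsilon>0$ and use the definition of $L(T,T')$ and $L(T',T'')$ as infima to choose label-preserving homeomorphisms $f:T\to T'$ and $g:T'\to T''$ such that
$$\log L(f) < L(T,T') + \tfrac{\epsilon}{2} \quad\text{and}\quad \log L(g) < L(T',T'') + \tfrac{\epsilon}{2}.$$
Since composition preserves labels, the map $g\circ f: T\to T''$ is a label-preserving homeomorphism, so its logarithmic Lipschitz constant bounds $L(T,T'')$ from above.

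Next, I would invoke the elementary fact that for any two Lipschitz maps between metric spaces, the Lipschitz constant of the composition satisfies $L(g\circ f)\leq L(g)\cdot L(f)$; this follows directly from the definition, since for any $x\neq y$ in $T$ we have $d_{euc}(g(f(x)),g(f(y)))\leq L(g)\,d_{euc}(f(x),f(y))\leq L(g)L(f)\,d_{euc}(x,y)$. Taking logarithms yields $\log L(g\circ f)\leq \log L(g)+\log L(f)$.

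Combining these observations gives
$$L(T,T'') \leq \log L(g\circ f)\leq \log L(g) + \log L(f) < L(T,T') + L(T',T'') + \epsilon.$$
Since $\epsilon > 0$ was arbitrary, the triangle inequality $L(T,T'')\leq L(T,T')+L(T',T'')$ follows. There is no real obstacle in this argument; the only point to be careful about is that the infimum defining $L$ may or may not be attained for general labeled triangles of equal area (it is attained for acute triangles by Theorem \ref{acute-proof}, but not necessarily in general), which is why one passes to an $\epsilon$-approximation rather than directly working with minimizers.
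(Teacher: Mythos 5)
Your proof is correct and follows essentially the same route as the paper: both rest on the submultiplicativity $L(g\circ f)\leq L(g)L(f)$ of Lipschitz constants under composition of label-preserving homeomorphisms, with your version merely spelling out the $\epsilon$-approximation of the infima that the paper leaves implicit. The remark about the infimum not necessarily being attained is a sensible precaution but does not change the argument.
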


The following theorem is immediate from Lemma \ref{thurston-type} and Lemma \ref{triangle-L}.

\begin{theorem}
	The function $L$ is a metric on $\frak{T}_{A}$.
\end{theorem}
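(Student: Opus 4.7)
The plan is to verify the three axioms in the paper's (asymmetric) definition of a metric directly from the two preceding lemmas. Well-definedness of $L$ on equivalence classes has already been observed in the text, so I only need to check non-negativity, strict positivity off the diagonal, and the triangle inequality.

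For the first axiom, $L([T],[T])=0$: the identity map $\mathrm{id}:T\to T$ is a label-preserving homeomorphism with Lipschitz constant $1$, so $L(T,T)\le \log 1 = 0$. Combined with Lemma \ref{thurston-type}, which asserts $L(T,T')\ge 0$ whenever $T$ and $T'$ have equal area (since any $L(T,T')\le 0$ forces $L(T,T')=0$), we conclude $L(T,T)=0$.

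For the second axiom, $L([T],[T'])>0$ whenever $[T]\ne[T']$: this is the contrapositive of Lemma \ref{thurston-type}. If $L(T,T')\le 0$, the lemma yields $L(T,T')=0$ together with a label-preserving isometry $T\to T'$, whence $[T]=[T']$ in $\frak{T}_A$. So if $[T]\ne [T']$, then $L(T,T')>0$.

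For the third axiom, the triangle inequality $L([T],[T'])+L([T'],[T''])\ge L([T],[T''])$ is exactly the content of Lemma \ref{triangle-L}, obtained by composing near-optimal label-preserving homeomorphisms $f:T\to T'$ and $g:T'\to T''$ and using $L(g\circ f)\le L(g)\,L(f)$, then passing to the infimum (and to logarithms). I do not expect any real obstacle here; the result is a direct assembly of the two lemmas, and the only small point to remark on is that symmetry is explicitly not part of the paper's definition, so no additional argument is needed on that front (indeed, symmetry of $L$ on $\frak{AT}_A$ will be established later using the explicit formula $m$ from Remark \ref{equal-area}).
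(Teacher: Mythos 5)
Your proposal is correct and follows exactly the route the paper takes: the paper states the theorem as immediate from Lemma \ref{thurston-type} (which gives non-negativity and that $L$ separates points, via the isometry conclusion) and Lemma \ref{triangle-L} (the triangle inequality via $L(g\circ f)\le L(g)L(f)$), and you have simply spelled out that assembly, including the easy observation that the identity map gives $L(T,T)\le 0$. Nothing is missing.
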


Clearly the function $m$ defined in Equation (\ref{formula-m}) induces a function on $\frak{T}_A\times\frak{T}_A$. We denote this function by $m$ as well.
Our next objective is to prove that $m$ is a symmetric metric on $\frak{T}_{A}$.

\begin{theorem}
	\label{m}
	$m$ is a symmetric metric on $\frak{T}_A$.
	
	\begin{proof}
		Let $[T], [T'] \in \frak{T}_A$. Then $T$ has edges of length $\lvert\lvert e_1 \rvert\lvert, \lvert\lvert e_2 \rvert\lvert, \lvert\lvert e_3 \rvert\lvert$ and $T'$ has edges of length $\lvert\lvert e'_1 \rvert\lvert, \lvert\lvert e'_2 \rvert\lvert, \lvert\lvert e'_3 \rvert\lvert$. Since $T$ and $T'$ have  same area, by Remark \ref{equal-area}, we have

	$$m(T,T')	
	=\log(\max\{\frac{\lvert\lvert e'_1 \rvert \rvert}{\lvert \lvert e_1 \rvert\rvert}, \frac{\lvert\lvert e'_2 \rvert \rvert}{\lvert \lvert e_2 \rvert\rvert},\frac{\lvert\lvert e'_3 \rvert \rvert}{\lvert \lvert e_3 \rvert\rvert},\frac{\lvert\lvert e_1 \rvert \rvert}{\lvert \lvert e'_1 \rvert\rvert},\frac{\lvert\lvert e_2 \rvert \rvert}{\lvert \lvert e'_2 \rvert\rvert},\frac{\lvert\lvert e_3 \rvert \rvert}{\lvert \lvert e'_3 \rvert\rvert}\}).$$
	
It is clear from this formula that $m$ separates points and $m$ is symmetric. Let $T''$
be another triangle with area $A$ having edges with length $\lvert\lvert e''_1 \rvert\lvert, \lvert\lvert e''_2 \rvert\lvert, \lvert\lvert e''_3 \rvert\lvert$. It follows from the  following inequality that $m$ satisfies the triangle inequality:

$$\max\{\frac{\lvert\lvert e'_1 \rvert \rvert}{\lvert \lvert e_1 \rvert\rvert}, \frac{\lvert\lvert e'_2 \rvert \rvert}{\lvert \lvert e_2 \rvert\rvert},\frac{\lvert\lvert e'_3 \rvert \rvert}{\lvert \lvert e_3 \rvert\rvert},\frac{\lvert\lvert e_1 \rvert \rvert}{\lvert \lvert e'_1 \rvert\rvert},\frac{\lvert\lvert e_2 \rvert \rvert}{\lvert \lvert e'_2 \rvert\rvert},\frac{\lvert\lvert e_3 \rvert \rvert}{\lvert \lvert e'_3 \rvert\rvert}\}$$

$$\times \max\{\frac{\lvert\lvert e''_1 \rvert \rvert}{\lvert \lvert e'_1 \rvert\rvert}, \frac{\lvert\lvert e''_2 \rvert \rvert}{\lvert \lvert e'_2 \rvert\rvert},\frac{\lvert\lvert e''_3 \rvert \rvert}{\lvert \lvert e'_3 \rvert\rvert},\frac{\lvert\lvert e'_1 \rvert \rvert}{\lvert \lvert e''_1 \rvert\rvert},\frac{\lvert\lvert a'_2 \rvert \rvert}{\lvert \lvert a''_2 \rvert\rvert},\frac{\lvert\lvert e'_3 \rvert \rvert}{\lvert \lvert e''_3 \rvert\rvert}\}$$

$$\geq \max\{\frac{\lvert\lvert e''_1 \rvert \rvert}{\lvert \lvert e_1 \rvert\rvert}, \frac{\lvert\lvert e''_2 \rvert \rvert}{\lvert \lvert e_2 \rvert\rvert},\frac{\lvert\lvert e''_3 \rvert \rvert}{\lvert \lvert e_3 \rvert\rvert},\frac{\lvert\lvert e_1 \rvert \rvert}{\lvert \lvert e''_1 \rvert\rvert},\frac{\lvert\lvert e_2 \rvert \rvert}{\lvert \lvert e''_2 \rvert\rvert},\frac{\lvert\lvert e_3 \rvert \rvert}{\lvert \lvert e''_3 \rvert\rvert}\}$$

\end{proof}

\end{theorem}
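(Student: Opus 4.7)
The plan is to exploit Remark \ref{equal-area}, which rewrites $\exp(m(T,T'))$ purely in terms of the six ratios
$$\frac{\lvert\lvert e'_i \rvert \rvert}{\lvert \lvert e_i \rvert\rvert},\quad \frac{\lvert\lvert e_i \rvert \rvert}{\lvert \lvert e'_i \rvert\rvert},\qquad i\in\{1,2,3\}.$$
This symmetric formula is valid because $T$ and $T'$ have the same area, which makes altitude ratios equal to reciprocal edge ratios. Once this is in hand, each axiom is checked directly from the formula.

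First, I would verify that $m$ is well defined on $\frak{T}_A\times\frak{T}_A$: label-preserving isometries do not change edge lengths, so the expression only depends on the equivalence classes. Symmetry is then immediate since swapping the roles of $T$ and $T'$ merely permutes the six entries of the max. For separation, I note that $m([T],[T'])\ge 0$ always (each of the six ratios, together with its reciprocal, has maximum $\ge 1$), and that $m([T],[T'])=0$ forces every ratio $\lvert\lvert e'_i\rvert\rvert/\lvert\lvert e_i\rvert\rvert$ and its reciprocal to be $\leq 1$, hence equal to $1$. Thus $T$ and $T'$ have the same three side lengths in the same labeled order, which produces a label-preserving isometry, giving $[T]=[T']$.

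The triangle inequality will follow from a ``submultiplicativity of the max'' argument. Given $T,T',T''$ of area $A$ with edge lengths $\lvert\lvert e_i\rvert\rvert,\lvert\lvert e'_i\rvert\rvert,\lvert\lvert e''_i\rvert\rvert$, I factor
$$\frac{\lvert\lvert e''_i\rvert\rvert}{\lvert\lvert e_i\rvert\rvert}=\frac{\lvert\lvert e''_i\rvert\rvert}{\lvert\lvert e'_i\rvert\rvert}\cdot\frac{\lvert\lvert e'_i\rvert\rvert}{\lvert\lvert e_i\rvert\rvert}\quad\text{and}\quad \frac{\lvert\lvert e_i\rvert\rvert}{\lvert\lvert e''_i\rvert\rvert}=\frac{\lvert\lvert e_i\rvert\rvert}{\lvert\lvert e'_i\rvert\rvert}\cdot\frac{\lvert\lvert e'_i\rvert\rvert}{\lvert\lvert e''_i\rvert\rvert}.$$
Each factor on the right is bounded above by $\exp(m(T,T'))$ or $\exp(m(T',T''))$, so every one of the six entries of $\exp(m(T,T''))$ is $\leq \exp(m(T,T'))\cdot\exp(m(T',T''))$. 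Taking the max over $i$ and logarithms yields $m(T,T'')\le m(T,T')+m(T',T'')$.

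I do not expect a serious obstacle here; the only point that requires care is to make sure that when we pair up each ratio $\lvert\lvert e''_i\rvert\rvert/\lvert\lvert e_i\rvert\rvert$ with the correct factorization through $T'$, the corresponding factors indeed appear among the six entries defining $\exp(m(T,T'))$ and $\exp(m(T',T''))$. Since the symmetric form from Remark \ref{equal-area} contains every ratio together with its reciprocal, this pairing is automatic, and the triangle inequality follows with no case analysis.
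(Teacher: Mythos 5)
Your proposal is correct and follows essentially the same route as the paper: both reduce to the symmetric six-ratio formula of Remark \ref{equal-area} and then verify the axioms directly, with the triangle inequality coming from the same factorization of each ratio $\lVert e''_i\rVert/\lVert e_i\rVert$ through $T'$ (this is exactly the displayed product-of-maxima inequality in the paper's proof). You merely spell out a few details the paper leaves implicit, such as well-definedness on equivalence classes and the SSS argument for separation.
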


\noindent If there is no risk of confusion, we will denote  the equivalence class of a labeled triangle $T$ in $\frak{T}_A$ by $T$ as well.
\subsection{The space of acute triangles}
We denote the set of equivalence classes of acute triangles by $\frak{AT}$.  For each fixed $A>0$, let $\frak{AT}_A$ be the set of equivalence classes of acute triangles having area $A$. By Theorem \ref{acute-proof} the restrictions of $L$ and $m$ on $\frak{AT}_{A}$ give the same  metric.

\subsection{Two models for $\frak{AT}_{A}$}
In this section we introduce two models of $\frak{AT}_{A}$. First, since each labeled triangle is determined by the length of its edges, there is an injection $\frak{AT}_{A}\to \R^{*3}_+$
sending the class of a labeled triangle $T$ to $(\lvert \lvert e_1 \rvert \rvert,\lvert \lvert e_2 \rvert \rvert, \lvert \lvert e_3 \rvert \rvert)$, where $e_1,e_2,e_3$ are the edges of $T$. The image of this map is a 2-dimensional submanifold of $\R_+^{*3}$. Furthermore $m$ induces a metric in this manifold and we denote this metric by $m$ as well. Clearly, if $a=(a_1,a_2,a_3)$ and $a'=(a_1',a_2',a_3')$ are in this manifold, then

$$m(a,a')=\log(\max\{\frac{a_1}{a_1'},\frac{a_1'}{a_1},\frac{a_2}{a_2'},\frac{a_2'}{a_2},\frac{a_3}{a_3'},\frac{a_3'}{a_3}\}).$$

\noindent If there is no risk of confusion we will denote this submanifold by $\frak{AT}_{A}$ as well. We will call this model as the {\it edge model}.

Now we introduce the other model. We first note that each labelled triangle having area $A$ is determined by its angles. That is, the map $\frak{AT}_{A}\to \R^{*3}_+$ sending  a triangle $T$ to $(\theta_1,\theta_2,\theta_3)$ is injective,
where $\theta_1,\theta_2,\theta_3$ are angles at the vertices of $T$. Its image is the interior of a Euclidean  equilateral triangle in a hyperplane in $\R^{3}$. If there is no risk of confusion, we will also denote this image by $\frak{AT}_{A}$. We will denote by $(\theta_1,\theta_2,\theta_3)$ the triangle having angles $\theta_1,\theta_2,\theta_3$.  We will call this model the {\it angle model}.

\subsection{Topology of $\frak{T}_A$}

We consider the edge model for $\frak{T}_A$. It is equipped with two topologies: the one which comes from the Euclidean metric (the one induced from the ambient space) and the one which comes from $m$. We denote these topologies by $\mathcal{T}_{euc}$ and $\mathcal{T}_m$, respectively.

\begin{proposition}
	$\mathcal{T}_{euc}$ and $\mathcal{T}_m$ coincide.
	\begin{proof}
		It is enough to show that the identity map of $\frak{T}_A$ is a homeomorphism.
		This can be done by proving that a sequence $T_n=(a_n,b_n,c_n)\to (a,b,c)$ with respect to $m$ if and only if $T_n\to (a,b,c)$ with respect to $d_{euc}$. If $(a_n,b_n,c_n)\to (a,b,c)$ with respect to $d_{euc}$, then $\log a_n \to \log a$, $\log b_n \to \log b$ and $\log c_n \to \log c$
		as $n\to \infty$. Thus $(a_n,b_n,c_n)\to (a,b,c)$ with respect to $m$ as $n\to \infty$. If $(a_n,b_n,c_n)\to (a,b,c)$ with respect to $m$, then $\lvert \log a_n -\log a\rvert \to 0$, $\lvert \log b_n - \log b \rvert \to 0$ and $\lvert \log c_n - \log c \rvert \to 0$ as $n \to \infty$. Then $a_n\to a$, $b_n \to b$ and $c_n\to c$ as $n\to \infty$, but this means that $(a_n,b_n,c_n)\to (a,b,c)$ with respect to $d_{euc}$ as $n\to \infty$. 
	\end{proof}
\end{proposition}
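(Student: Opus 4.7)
The plan is to show that the identity map $\mathrm{id}\colon(\frak{T}_A,\mathcal{T}_{euc})\to(\frak{T}_A,\mathcal{T}_m)$ is a homeomorphism, which reduces to a comparison of convergence of sequences in each topology. In the edge model, by Remark \ref{equal-area} (i.e.\ the observation recorded at the end of Theorem \ref{m}'s setup), for any two points $a=(a_1,a_2,a_3)$ and $a'=(a_1',a_2',a_3')$ of $\frak{T}_A$, we have the closed form
\[
m(a,a')=\max_{i\in\{1,2,3\}}\bigl|\log a_i-\log a_i'\bigr|.
\]
So the whole statement boils down to the fact that on the open submanifold $\frak{T}_A\subset\R^{*3}_+$, the distance $m$ agrees, up to the componentwise homeomorphism $\log\colon\R^{*}_+\to\R$, with the $\ell^\infty$ distance, which in turn is topologically equivalent to the Euclidean distance on $\R^3$.

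Concretely, I would argue by sequences. Fix a sequence $(a^{(n)})=(a_1^{(n)},a_2^{(n)},a_3^{(n)})$ in $\frak{T}_A$ and a point $a\in\frak{T}_A$. For the direction ``Euclidean $\Rightarrow m$'': if $a^{(n)}\to a$ in $d_{euc}$, then each coordinate $a_i^{(n)}\to a_i>0$, and since $\log$ is continuous on $\R^*_+$ we get $\log a_i^{(n)}\to\log a_i$, hence $\max_i|\log a_i^{(n)}-\log a_i|\to 0$, which is exactly $m(a^{(n)},a)\to 0$. For the direction ``$m\Rightarrow$ Euclidean'': if $m(a^{(n)},a)\to 0$, then $|\log a_i^{(n)}-\log a_i|\to 0$ for each $i$, and applying the continuous function $\exp$ we recover $a_i^{(n)}\to a_i$ for each $i$, whence $a^{(n)}\to a$ in $d_{euc}$.

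There is no essential obstacle here; the only subtlety worth pointing out is that this argument uses in a crucial way that we are restricted to triangles of the same fixed area $A$ (so that Remark \ref{equal-area} lets us drop the altitude ratios from the definition of $m$ in favour of the edge ratios). Without that reduction, the max in \eqref{formula-m} would mix edges and altitudes, but altitudes are themselves continuous functions of the edge lengths on $\frak{T}_A$, so even in the general case the same strategy would work. Since $\frak{T}_A$ is first-countable in both topologies (being a subset of $\R^3$ with a metric), sequential homeomorphism of the identity map suffices to conclude that $\mathcal{T}_{euc}=\mathcal{T}_m$.
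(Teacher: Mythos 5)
Your argument is correct and follows essentially the same route as the paper: reduce to sequential convergence of the identity map and use the closed form $m(a,a')=\max_i|\log a_i-\log a_i'|$ together with the continuity of $\log$ and $\exp$. The extra remarks you add (first-countability justifying the sequential criterion, and the role of the fixed-area normalization in eliminating the altitude ratios) are sound and only make explicit what the paper leaves implicit.
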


\subsection{The Space of non-obtuse triangles}

We will prove that $\frak{T}_{A}$ is complete and introduce the space of non-obtuse triangles as the closure of  $\frak{AT}_{A}$ in $\frak{T}_{A}$. 

\begin{proposition}
	$(\frak{T}_A,m)$ is complete. 
	\begin{proof}
	We use the edge model of $\frak{T}_{A}$. Let $T_n=(a_n,b_n,c_n)$ be a Cauchy sequence of triangles with edge lengths $a_n,b_n,c_n$. 
	It follows that $\log a_n, \log b_n, \log c_n$ are Cauchy sequences. Then we have
	
$$\log a_n \to \log a, \log b_n \to \log b, \log c_n \to\log c.$$

\noindent $(a_n,b_n,c_n)\to (a,b,c)$ (with respect to  $m$). $(a,b,c)$ is a triangle with area $A$ since area is a continuous function of the length of edges of triangles, and $(a,b,c)$ satisfies strict triangle inequalities since the corresponding ``triangle" is non-degenerate. Indeed any degenerate triangle has zero area.
	\end{proof}
\end{proposition}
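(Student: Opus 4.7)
The plan is to work entirely in the edge model, where the metric $m$ takes the explicit form
\[
m\bigl((a,b,c),(a',b',c')\bigr) = \max\bigl\{\,\lvert \log a - \log a'\rvert,\ \lvert \log b - \log b'\rvert,\ \lvert \log c - \log c'\rvert\,\bigr\}.
\]
Thus $m$ is nothing but the $\ell^\infty$ distance on the logarithmic edge coordinates. This identification is the whole engine of the argument: completeness in $m$ reduces to completeness of $\mathbb{R}^3$ in the sup norm, together with a closedness statement for the image of $\mathfrak{T}_A$ under the logarithm.

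First I would take a Cauchy sequence $T_n = (a_n,b_n,c_n)$ in $(\mathfrak{T}_A, m)$. By the displayed formula above, the three real sequences $(\log a_n)$, $(\log b_n)$, $(\log c_n)$ are each Cauchy in $\mathbb{R}$, hence converge to some limits $\log a$, $\log b$, $\log c$. Setting $a=e^{\log a}$ and similarly for $b,c$ yields a candidate limit point $(a,b,c) \in \mathbb{R}^{*3}_+$, and $T_n \to (a,b,c)$ with respect to $m$ directly from the formula.

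The only nontrivial step, which I expect to be the main obstacle, is verifying that $(a,b,c)$ actually lies in $\mathfrak{T}_A$: it must satisfy strict triangle inequalities and define a triangle of area exactly $A$. For the area condition, I would invoke Heron's formula, which expresses the area as a continuous function of $(a,b,c)$; since each $T_n$ has area $A$, the limit has area $A$ as well. For the strict triangle inequalities, I would argue by contradiction: if $(a,b,c)$ were degenerate (say $a = b+c$), then the Heron area would vanish, contradicting $A > 0$. Therefore $(a,b,c)$ is a genuine non-degenerate labeled triangle of area $A$, so it represents a point of $\mathfrak{T}_A$, and the Cauchy sequence converges there. This establishes completeness.
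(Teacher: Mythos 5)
Your proposal is correct and follows essentially the same route as the paper: pass to logarithmic edge coordinates, use the equal-area formula for $m$ to see that the log-coordinates form Cauchy sequences in $\mathbb{R}$, and then verify that the limit is a non-degenerate triangle of area $A$ by continuity of the area (Heron's formula) and the fact that degenerate triples have zero area. The only difference is that you name Heron's formula explicitly, which the paper leaves implicit.
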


Furthermore, the set of non-obtuse triangles is a closed subset of $\frak{T}_A$. This follows from the fact that the angle functions are continuous. 
The set of acute triangles $\frak{AT}_A$ is an open subset of $\frak{T}_A$ and it  is easy to see that its closure in $\frak{T}_A$ is the set of non-obtuse triagles of area A, which is denoted by $\overline{\frak{AT}_A}$.

\subsection{$\frak{T}_A$ and $\frak{T}_{A'}$ are isometric}

For any two triangles
$T$ and $T'$, we have

$$m(\lambda T, \lambda T')= m(T,T'),$$

\noindent  It follows  that $\frak{T}_A$ and $\frak{T}_{\lambda^2A}$ are isometric by the map
sending the class of a triangle $T$ to the class of the triangle $\lambda T$. Similarly, for any $A,A'>0$, $\overline{\frak{AT}_A}$ and $\overline{\frak{AT}_{A'}}$ are isometric.

\section{ Geodesics in $\overline{\frak{AT}_A}$}
\label{section-special-geodesic}

Let $(X,d)$ be a metric space where $d$ is not necessarily symmetric. We use the following version of the notion of geodesic:

\begin{definition}
 A geodesic in $X$ is a map $h: I \to X$ where $I$ is an interval of $\R$ such that for any triple $x_1, x_2, x_3 \in I$ satisfying $x_1\leq x_2\leq x_3$, we have

$$d(h(x_1),h(x_3))=d(h(x_1),h(x_2))+d(h(x_2),h(x_3)).$$
We also require that $h$ is not constant on any 
subinterval of $I$.

\end{definition}
 Note that if $d$ is symmetric, a map obtained from $h$ by reversing the direction of a geodesic is also a geodesic. But this is not necessarily true for asymmetric metrics. Note also that if $d$ is asymmetric, one needs, in the above definition, to be careful about the order of the  arguments.

For $A>0$, let $T=(a_1,a_2,a_3)$ and $T'=(a_1',a_2',a_3')$
be two different elements in $\overline{\frak{AT}_A}$, where we use the edge model. In this section we prove that $T$ and $T'$ can be joined by a geodesic. In other words, we show that the metric space $(\overline{\frak{AT}_A},m)$ is geodesic.

As before, there is unique $i\in {1,2,3}$ such that one of the following holds:

\begin{enumerate}
	\item 
	$\theta_j\leq \theta'_j$ and $\theta_k\leq \theta'_k$,
	
\item
$\theta_j\geq \theta'_j$ and $\theta_k\geq \theta'_k$,
\end{enumerate}

\noindent where $j,k \in \{1,2,3\}$ and $j,k \neq i$. Without loss of generality, we  assume that $i=3$ and 
	
	$$\theta_1\geq \theta'_1 \ \text{and} \ \theta_2\geq \theta'_2,$$
	
\noindent and we shall find a geodesic from $T$ to $T'$. Since geodesics are reversible in a symmetric metric space, this argument should cover the case

	$$\theta_1\leq \theta'_1 \ \text{and} \ \theta_2\leq \theta'_2.$$
	
Consider the scaled triangles $\bar{T}=\frac{1}{a_3}T=(\frac{a_1}{a_3},\frac{a_2}{a_3},1)$ and $\bar{T}'=\frac{1}{a_3'}(\frac{a_1'}{a_3'},\frac{a_2'}{a_3'},1)$. The fact that they have an edge of equal length and the conditions on the angles imply that $\bar{T}'$ can be drawn inside $\bar{T}$. See Figure \ref{straight-line}. Let the set of vertices of $\bar{T'}$ be $\{\bar{v}'_1,\bar{v}'_2,\bar{v}'_3\}$ and the set of vertices of $\bar{T}$ be $\{\bar{v}_1,\bar{v}_2,\bar{v}_3\}$. Consider a family of triangles $\{\bar{T}_t\}$, $t\in [0,1]$, with angles $(\theta_1(t),\theta_2(t),\theta_3(t))$ having the following properties:

\begin{enumerate}
	\item 
For each $t$,	$\bar{T}_t$ is a triangle so that two of its vertices are $\bar{v_1}$ and $\bar{v_2}$.
	\item
	$\bar{T}_0= \bar{T}$ and $\bar{T}_1=\bar{T}'$.
	
\item
Each $\theta_i(t)$ is either a decreasing or increasing function.
\item
Each $\theta_i(t)$ is a continuous function.
\item
$(\theta_1(t),\theta_2(t),\theta_3(t))$ is not constant on any subinterval of $[0,1]$. 
\end{enumerate} 

\noindent See Figure \ref{straight-line} for an example of such a family.

If we scale each $\bar{T}_t$ by a factor $\lambda(t)$ so that the area of $\lambda(t)\bar{T}_t$ is $A$, then we get a family of triangles $T_t=(a_1(t),a_2(t),a_3(t))$
so that $T_0=(a_1,a_2,a_3)$ and $T_1=(a_1',a_2',a_3',)$. By Lemma \ref{crucial} this family has the following property:
\begin{equation}
	\label{bu-da}
\exp(m(T_{t_1},T_{t_2}))=\frac{a_3(t_2)}{a_3(t_1)}
\end{equation}
\noindent for any $t_1\leq t_2$, $t_1,t_2 \in [0,1]$. It follows that if $t_1\leq t_2 \leq t_3$ then

$$\exp(m)(T_{t_1},T_{t_3})=\frac{a_3(t_3)}{a_3(t_2)}\frac{a_3(t_2)}{a_3(t_1)}.$$
Thus,

$$m(T_{t_1},T_{t_3})= m(T_{t_1},T_{t_2})+m(T_{t_2},T_{t_3}).$$

\noindent This means that the 1-parameter family of triangles $T_t$ forms a geodesic. Therefore we proved the following.

\begin{figure}
	\hspace*{-2cm} 
	\includegraphics[scale=0.65]{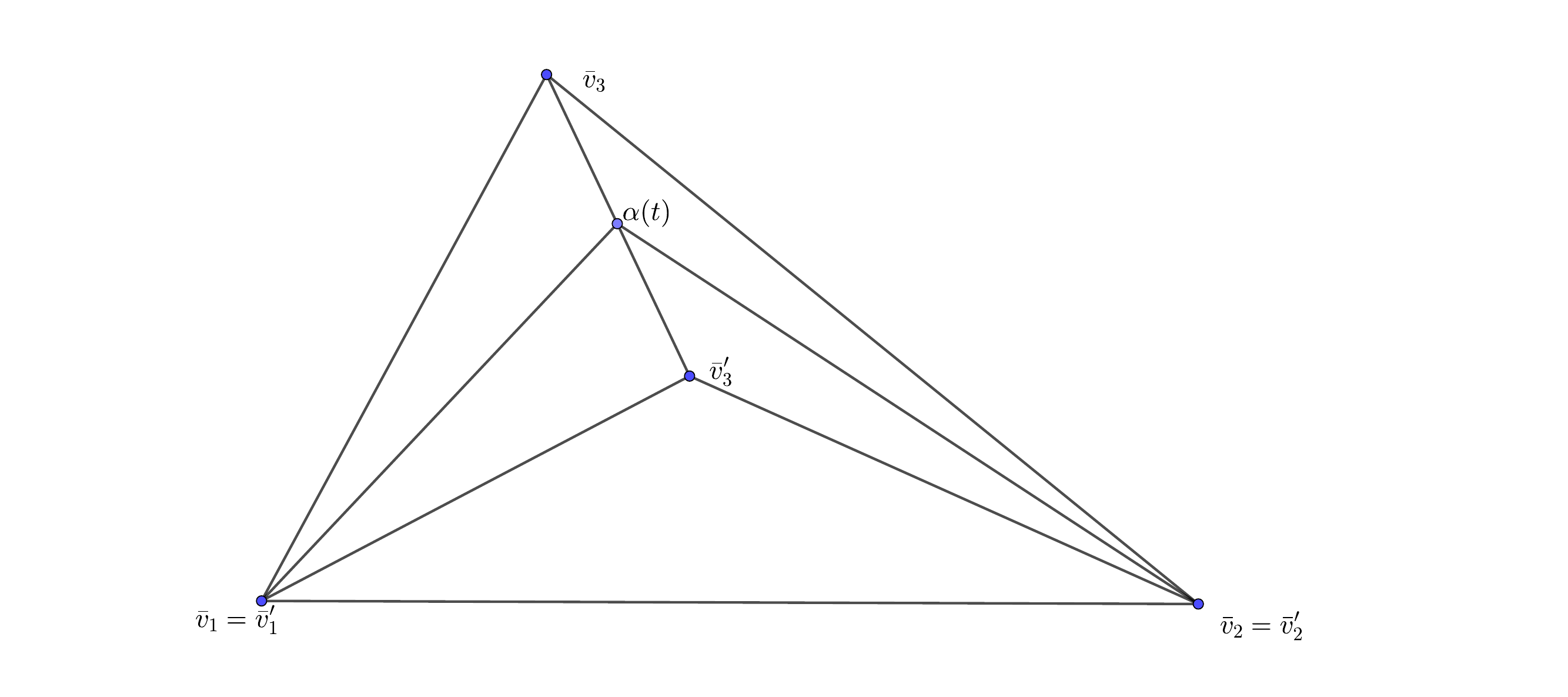}
	
	\caption{$\alpha(t)$ is a parametrization of the line segment between $\bar{v}_3$ and $\bar{v}_3'$. The family of triangles $\bar{T}_t=\Delta \bar{v}_1\bar{v}_2\alpha(t)$ is a geodesic in $\overline{\frak{AT}_A}$.}
	\label{straight-line}
\end{figure}

\begin{theorem} The space
	$(\overline{\frak{AT}_A},m)$  is geodesic, that is, any two points of $\overline{\frak{AT}}_A$ can be joined by a geodesic.
\end{theorem}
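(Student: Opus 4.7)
The plan is to construct, for any two distinct points $T, T' \in \overline{\frak{AT}_A}$, a concrete one-parameter family $(T_t)_{t\in[0,1]}$ in $\overline{\frak{AT}_A}$ with $T_0 = T$ and $T_1 = T'$, along which $m$ is additive. This is exactly the construction sketched in the paragraphs preceding the theorem, and my task is to verify that it yields a geodesic.

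First, I would apply Lemma \ref{crucial} to locate an index $i\in\{1,2,3\}$ governing $m(T,T')$. After relabeling, take $i=3$ and treat the case $\theta_1 \ge \theta_1'$, $\theta_2 \ge \theta_2'$; the reverse case follows by reparametrizing $t \mapsto 1-t$, using the symmetry of $m$. I then rescale to $\bar T = T/a_3$ and $\bar T' = T'/a_3'$, identify their base edges $e_3$, and note that the angle hypotheses force $\bar v_3'$ to lie inside $\bar T$. Letting $\alpha(t) = (1-t)\bar v_3 + t\bar v_3'$ and $\bar T_t = \Delta \bar v_1 \bar v_2 \alpha(t)$, I rescale each $\bar T_t$ by $\lambda(t) = \sqrt{A/\mathrm{Area}(\bar T_t)}$ to obtain $T_t \in \overline{\frak{AT}_A}$ with third-edge length $a_3(t) = \lambda(t)$.

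The key verification is monotonicity of the angles $\theta_i(t)$ of $\bar T_t$ in $t$. The segment $[\bar v_3, \bar v_3']$ does not pass through $\bar v_1$ or $\bar v_2$, so the ray from each such vertex through $\alpha(t)$ rotates in a fixed direction; hence $\theta_1(t)$ and $\theta_2(t)$ are monotonic, and since both are decreasing (by the hypotheses comparing $\bar T$ and $\bar T'$), $\theta_3(t) = \pi - \theta_1(t) - \theta_2(t)$ is monotonically increasing. Consequently, $a_3(t) = \lambda(t)$ is monotonically increasing as well, since Area$(\bar T_t)$ decreases as $\alpha(t)$ moves. For any $t_1 \le t_2$, the pair $(T_{t_1}, T_{t_2})$ therefore satisfies hypothesis $(2)$ of Lemma \ref{crucial} with distinguished index $3$, and the lemma gives
\[
\exp(m(T_{t_1},T_{t_2})) = \frac{a_3(t_2)}{a_3(t_1)}.
\]

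Finally, this multiplicative identity telescopes: for $t_1 \le t_2 \le t_3$,
\[
\exp(m(T_{t_1},T_{t_3})) = \frac{a_3(t_3)}{a_3(t_1)} = \frac{a_3(t_3)}{a_3(t_2)} \cdot \frac{a_3(t_2)}{a_3(t_1)} = \exp(m(T_{t_2},T_{t_3})) \cdot \exp(m(T_{t_1},T_{t_2})),
\]
which yields $m(T_{t_1},T_{t_3}) = m(T_{t_1},T_{t_2}) + m(T_{t_2},T_{t_3})$, the defining identity for a geodesic; non-constancy on subintervals follows from strict monotonicity of $a_3(t)$. The main obstacle is the monotonicity claim, which is geometrically clear but requires the observation that linear motion of a point relative to a fixed viewpoint produces monotone angular rotation, combined with the angle-ordering hypothesis inherited from Lemma \ref{crucial}; once this is in place, the rest of the proof reduces to the telescoping argument above.
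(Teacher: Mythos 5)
Your proposal is correct and follows essentially the same route as the paper: choose the distinguished index via Lemma \ref{crucial}, normalize the opposite edge, move the apex linearly so that all three angles vary monotonically (which also keeps every intermediate triangle non-obtuse), rescale to area $A$, and telescope the identity $\exp(m(T_{t_1},T_{t_2}))=a_3(t_2)/a_3(t_1)$. Your added justification of the angle monotonicity (monotone rotation of the rays from the fixed vertices, plus the endpoint ordering) is a welcome elaboration of a step the paper leaves implicit.
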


In fact, we also proved that any two distinct point in $\overline{\frak{AT}_A}$ can be joined by a special geodesic which is a straight line segment in the angle model of $\overline{\frak{AT}}_A$. To be more precise let $T=(\theta_1,\theta_2,\theta_3)$ and $T'=(\theta_1',\theta_2',\theta_3')$. Let 
$$\theta_i(t)=(1-t)\theta_i(t)+t\theta_i'(t), t\in [0,1], i \in \{1,2,3\}.$$

\noindent Then clearly $T_t=(\theta_1(t),\theta_2(t),\theta_3(t))$ is a geodesic joining $T$ and $T'$.

\begin{remark}
	If $T$ and $T'$ are two elements in $\overline{\frak{AT}_A}$ that have different angles at each vertex, then 
	there are uncountably many geodesics joining them up to parametrization. But if $T$ and $T'$ have equal angle at a vertex, then up to parametrization there is a unique geodesic joining them.
\end{remark}
Now we determine when a path in $\overline{\frak{AT}_A}$ is a geodesic.

\begin{theorem}
	\label{classification-geodesics}
Let $T_t=(\theta_1(t),\theta_2(t),\theta_3(t))$, $t\in [a,b]$ be a continuous family of triangles which is not constant on any subinterval of $[a,b]$. Then $T_t$ is a geodesic if and only 
if each $\theta_i(t)$ is increasing or decreasing.

\begin{proof}
	The above argument shows that if each $\theta_i(t)$ is decreasing or increasing, then $T_t$ is a geodesic. To prove the converse, we assume that there is a family $T_t$ which is a geodesic but not all $\theta_i(t)$ are monotone. We also assume that $\theta_1(a)\leq \theta_1(b)$, $\theta_2(a)\leq \theta_2(b)$ and $\theta_3(a)>\theta_3(b)$. It follows that one of the $\theta_i(t)$ is not monotone. Since it is not possible that both of $\theta_1(t)$ and $\theta_2(t)$ are increasing, but $\theta_3(t)$ is not monotone, we should only consider the cases where $\theta_1(t)$ or $\theta_2(t)$ are not increasing. By symmetry, we only need to assume that $\theta_1$ is not increasing.
	Since $\theta_1(a)\leq \theta_1(b)$, there 
	exists $t_1,t_2,t_3\in [a,b]$ such that 
	$t_1<t_2<t_3$ and $\theta_1(t_1)<\theta_1(t_2)>\theta_1(t_3)$.

	Consider the triangles $T_{t_1},T_{t_2}$ ans $T_{t_3}$.
		Scale these triangles so that the edges 
		opposite to the vertex with label 3 have the same length. See Figure \ref{uclu}.
		Let $\bar{T}_{t_i}$ be the scaled triangles. Let $ h_3(t_i)$ be the altitude of $\bar{T}_{t_i}$ from the vertex labeled by $3$. We have  
		\begin{align*}
m(T_{t_1},T_{t_3})& =  \log(\frac{\sqrt{\lvert\lvert h_3(t_3)\rvert\rvert}}{\sqrt{\lvert\lvert h_3(t_1)\rvert\rvert}}),\\ m(T_{t_1},T_{t_2}) 
 & \geq \log(\frac{\sqrt{\lvert\lvert h_3(t_2)\rvert\rvert}}{\sqrt{\lvert\lvert h_3(t_1)\rvert\rvert}}),\\ m(T_{t_2},T_{t_3}) 
 &> \log(\frac{\sqrt{\lvert\lvert h_3(t_3)\rvert\rvert}}{\sqrt{\lvert\lvert h_3(t_2)\rvert\rvert}}),
\end{align*}
(see Section \ref{a-remark-about}). Thus, $m(T_{t_1},T_{t_3})<m(T_{t_1},T_{t_2})+m(T_{t_2},T_{t_3})$. Hence the family $\{T_t\}$ does not form a geodesic.
	\end{proof}
\end{theorem}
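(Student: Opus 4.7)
The forward direction (each $\theta_i$ monotone implies $T_t$ is a geodesic) has already been established in the discussion preceding the theorem statement, so I would focus entirely on the converse. The plan is to assume $T_t$ is a geodesic and that some angle function, say $\theta_1(t)$, fails to be monotone, and then produce three parameters $t_1<t_2<t_3$ at which the triangle inequality for $m$ becomes strict, contradicting the geodesic property $m(T_{t_1},T_{t_3})=m(T_{t_1},T_{t_2})+m(T_{t_2},T_{t_3})$.

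First I would perform case reduction. The sum of the three angles is constant, so at the endpoints $a,b$ the signs of $\theta_i(b)-\theta_i(a)$ cannot be all the same sign; by permuting labels we may assume $\theta_1(a)\le\theta_1(b)$, $\theta_2(a)\le\theta_2(b)$, and $\theta_3(a)\ge\theta_3(b)$. If the geodesic were not to have each $\theta_i$ monotone, then one of $\theta_1,\theta_2$ must fail to be monotone (if only $\theta_3$ oscillated while $\theta_1,\theta_2$ were both monotonically nondecreasing, constancy of the angle sum would force $\theta_3$ to be monotone too). By symmetry I may as well suppose $\theta_1$ is not monotone, so by continuity and $\theta_1(a)\le\theta_1(b)$ I can choose $a\le t_1<t_2<t_3\le b$ with $\theta_1(t_1)<\theta_1(t_2)$ and $\theta_1(t_2)>\theta_1(t_3)$.

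Next I would evaluate the three pairwise distances using the structural results of Section \ref{a-remark-about}. Rescale each $T_{t_i}$ to a triangle $\bar T_{t_i}$ having common $e_3$-length, and let $h_3(t_i)$ be the corresponding altitude from the vertex labeled $3$. Because the comparison triangles $T_{t_1}$ and $T_{t_3}$ have the angle at vertex $1$ moving in one direction from $t_1$ to $t_3$, and the angle at vertex $2$ also comparable, Lemma \ref{crucial} applies and gives the equality $m(T_{t_1},T_{t_3})=\tfrac{1}{2}\log(\lVert h_3(t_3)\rVert/\lVert h_3(t_1)\rVert)$. For the two halves, however, at least one of $m(T_{t_1},T_{t_2})$ and $m(T_{t_2},T_{t_3})$ corresponds to a pair in which $\theta_1$ increases while a second angle decreases, so the hypothesis of Lemma \ref{lem:two} is met for that pair. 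The remark following Lemma \ref{lem:two} then yields a \emph{strict} inequality $m\ge\tfrac{1}{2}\log(\lVert h_3\rVert\text{-ratio})$ for that half, while the other half still satisfies the corresponding (non-strict) bound.

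Adding the two halves yields
\[
m(T_{t_1},T_{t_2})+m(T_{t_2},T_{t_3})>\tfrac{1}{2}\log\!\frac{\lVert h_3(t_2)\rVert}{\lVert h_3(t_1)\rVert}+\tfrac{1}{2}\log\!\frac{\lVert h_3(t_3)\rVert}{\lVert h_3(t_2)\rVert}=\tfrac{1}{2}\log\!\frac{\lVert h_3(t_3)\rVert}{\lVert h_3(t_1)\rVert}=m(T_{t_1},T_{t_3}),
\]
contradicting the defining equality of a geodesic. The main obstacle is verifying that the case reduction genuinely exhausts all possible oscillatory behaviors: I must confirm that after applying the symmetries of the label set $\{1,2,3\}$ and swapping the roles of $T$ and $T'$, the configuration at $t_1,t_2,t_3$ indeed matches the hypothesis of Lemma \ref{lem:two} (one angle strictly up, another strictly down, the third going either way) for at least one of the two sub-intervals. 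Once this case analysis is clean, the strict-inequality argument above closes the proof.
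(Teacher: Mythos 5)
Your proposal follows the paper's own proof essentially verbatim: the same reduction (via constancy of the angle sum) to a non-monotone $\theta_1$ with $\theta_1(t_1)<\theta_1(t_2)>\theta_1(t_3)$, the same rescaling of the three triangles to a common $e_3$-length, and the same use of Lemma \ref{crucial} together with Lemma \ref{lem:two} and its following remark to produce the strict triangle inequality $m(T_{t_1},T_{t_3})<m(T_{t_1},T_{t_2})+m(T_{t_2},T_{t_3})$, contradicting geodesity. The residual case-checking you flag as the ``main obstacle'' is precisely the step the paper itself treats only briefly, so your argument is correct and coincides with the published one in both structure and detail.
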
	
	
\begin{figure}
	\hspace*{-6cm} 
	\includegraphics[scale=0.6]{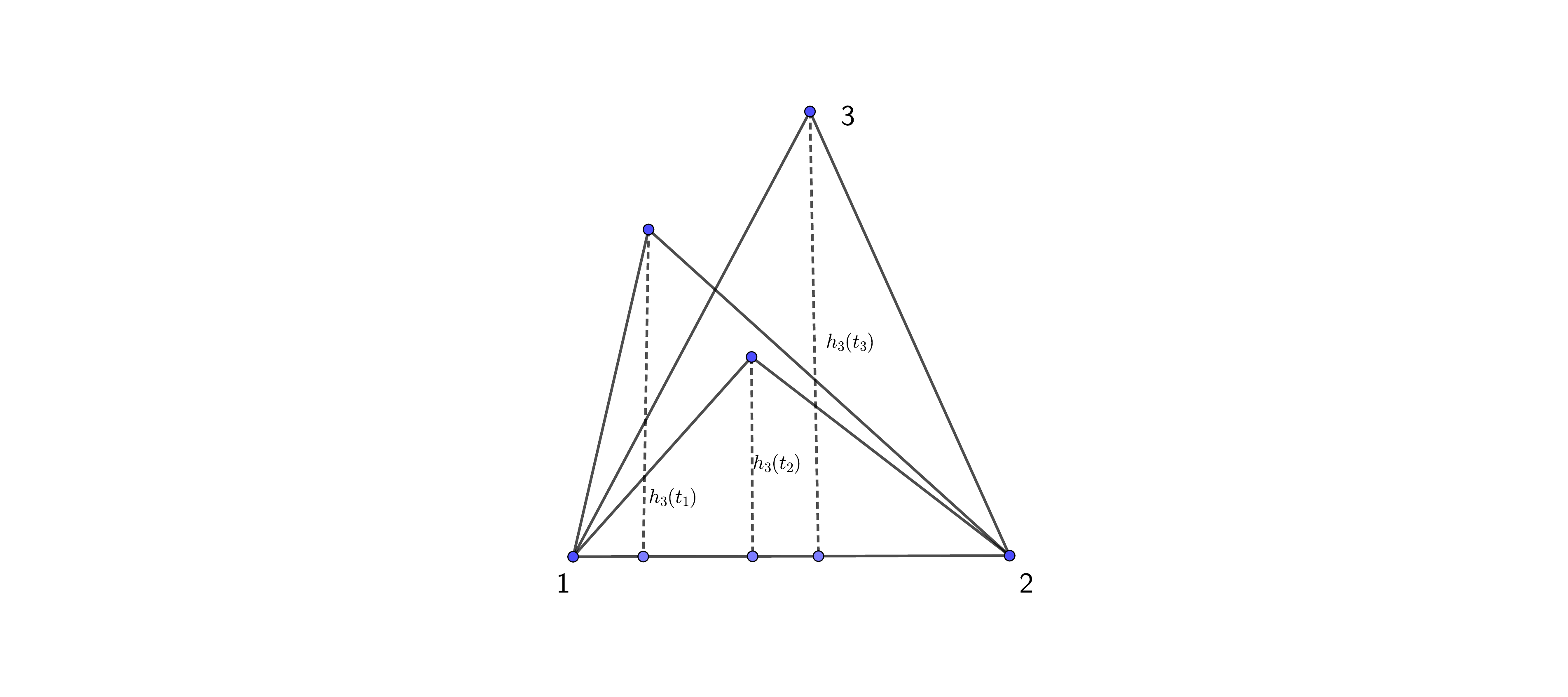}
	
	\caption{}
	\label{uclu}
\end{figure}

\section{The Finsler Structure on $\frak{AT}_A$}
\label{section-finsler}
Let us first recall the notion of a Finsler structure on a differentiable manifold. We work in the global Finsler setting adopted e.g. in \cite{PT-Finsler} (that is, without the tensor apparatus). We shall prove that the metric space $(\frak{AT}_{A},m)$, or, equivalently, $(\frak{AT}_A,L)$, is Finsler, that is, it is a length metric associated with a Finsler structure. We start with the definition of a {\it weak norm}.

\begin{definition}
	Let $V$ be a real vector space. A weak norm
	on $V$ is a function $V\to [0,\infty)$, $v \to \lvert\lvert v \rvert \rvert$ such that
	for every $v\in V$ the following properties hold for every $v$ and $w$ in $V$:
	\begin{enumerate}
		\item 
		$\lvert\lvert v \rvert\rvert=0$ if and only if $v=0$,
		\item
		$\lvert\lvert t v\rvert\rvert= t \lvert\lvert v \rvert \rvert $ for every $t>0$,
		\item
		$\lvert\lvert tv+(1-t)w\rvert\rvert\leq t\lvert\lvert v \rvert\rvert + (1-t)\lvert\lvert w \rvert\rvert$ for every $t\in [0,1]$.
	\end{enumerate}
\end{definition}

Let $M$ be a diffirentiable manifold and $TM$ be the tangent bundle of $M$. 

\begin{definition}
	A Finsler structure on $M$ is a function $F: TM \to [0,\infty)$ such that
	
	\begin{enumerate}
		\item 
	$F$ is continuous,
	\item
	for each $x\in M$, $F\lvert_{T_xM}$ is a weak norm.
	\end{enumerate}
\end{definition}

Let $F$ be a Finsler structure on a manifold $M$. For each $C^1$ curve $c: [a,b]\to M$, we define 

$$l(c)=l_F(c)=\int_{a}^{b}F(\dot{c}(t))dt.$$

\begin{definition}
	A metric $d$ on a differentiable manifold $M$ is called Finsler if it is the length metric associated to a Finsler structure, that is, if there exist a Finsler structure $F$
	on $M$ such that for every $x,y \in M$ we have
	
$$d(x,y)= \inf\{l_F(c)\}$$

\noindent where $c$ ranges over all piecewise $C^1$ curves such that $c(0)=x$ and $c(1)=y$. 
\end{definition}
Now we  show that the metric $m$ on $\frak{AT}_A$ is Finsler. Recall that we identified $\frak{AT}_A$ with the submanifold in $\R_+^{*3}$:

\begin{align*}
\frak{AT}_A= & \{(a_1,a_2,a_3) \in \R_+^{*3}: a_1+a_2-a_3>0,a_2+a_3-a_1>0,\\
& a_3+a_1-a_2 >0, \mathrm{Area}(a_1,a_2,a_3)=A\}.
\end{align*}

Let $F$ be the following function on $\frak{AT}_A$:

$$(a_1, a_2, a_3, v_1,v_2,v_3)\mapsto \max_i\{\frac{\lvert v_i \rvert}{a_i}\}.$$
Here, we have identified the tangent space of the manifold $\frak{AT}_A$ at a point $(a_1,a_2,a_3)$ with the Euclidean subspace of $ \R_+^{*3}$ spanned by the set $\frak{AT}_A$ itself, and $(v_1,v_2,v_3)$ are the coordinates of a tangent vector in the tangent space at the point $(a_1,a_2,a_3)$.

\noindent It is not difficult to see that $F$
is a Finsler structure on $\frak{AT}_A$. 

\begin{theorem}
	The metric $m$ on $\frak{AT}_{A}$ is Finsler.  More precisely, the metric $m$ 
	is the length metric associated with the Finsler structure $F$.
	\begin{proof}
		Let $a'=(a_1',a_2',a_3')$ and $a''=(a_1'',a_2'',a_3'')$ be in $\frak{AT}_{\frac{1}{2}}$. As in Section \ref{section-special-geodesic}, we may assume that 
		
		$$\exp(m(a',a''))= \frac{a_3''}{a_3'}$$
		
		\noindent and therefore there exists a $C^1$ function $g:[0,1]\to \frak{AT}_{A}$, $g(t)=(a_1(t),a_2(t),a_3(t))$ such that $g(0)=a',g(1)=a''$ and 
		
	$$\exp(m(c(t_1),c(t_2)))=\frac{a_3(t_2)}{a_3(t_1)} $$
for all  $t_1,t_2 \in [0,1], t_1\leq t_2$. Note that this implies that $a_3(t)$ is increasing.

We claim that 
\begin{equation}
	\label{budaonemli}
F(\dot{g}(t))= \frac{\lvert d(a_3(t))\rvert}{a_3(t)}=\frac{\dot{a}_3(t)}{a_3(t)}
\end{equation}
\noindent for each $t_0\in [0,1]$. Otherwise there exists $t_1\in[0,1]$ and $i\in\{1,2\}$ such that

$$\frac{\lvert \dot{a}_i(t_1)\rvert}{\lvert a_i(t_1)\rvert}>\frac{\dot{a}_3(t_1)}{a_3(t_1)}.$$

  Assume that $$\frac{ \dot{a}_i(t_1)}{ a_i(t_1)}>\frac{\dot{a}_3(t_1)}{a_3(t_1)}.$$
It follows that $\frac{d}{dt}(\log(\frac{a_i(t)}{a_3(t)}))>0$ at $t=t_1$ and we may assume that $t_1<1$. Therefore there is $t_2>t_1$ such that

$$\log(\frac{a_{i}(t_2)}{a_3(t_2)})>\log(\frac{a_i(t_1)}{a_3(t_1)}).$$
Hence we have
$$\frac{a_{i}(t_2)}{a_3(t_2)}>\frac{a_i(t_1)}{a_3(t_1)}, \ \text{or equivalently}\ \frac{a_{i}(t_2)}{a_i(t_1)}>\frac{a_3(t_2)}{a_3(t_1)},$$

\noindent This implies that $\exp(m(g(t_1),g(t_2)))\geq \frac{a_{i}(t_2)}{a_i(t_1)}>\frac{a_3(t_2)}{a_3(t_1)} $, which is a contradiction. In a similar manner we can show that 
$$\frac{-\dot{a}_i(t)}{a_i(t)}\leq \frac{\dot{a}_3(t)}{a_3(t)}.$$

Equation (\ref{budaonemli}) implies that 

$$l_F(g(t))=\int_{0}^1F(\dot{g}(t))=\int_{0}^1\frac{\dot{a}_3(t)}{a_3(t)}dt$$
$$=\log a_3(1)-\log a_3(0)=\log a_3'' -\log a_3'=m(a',a'').$$

Now we show that for any $C^1$ curve $c:[0,1]  \to \frak{AT}_{A}$, $c(t)=(a_1(t),a_2(t),a_3(t)).$ we have
	
	$$l_F(c)\geq m(a',a'')$$
	
We have 

$$L_f(c)\geq \int_{0}^1\frac{\lvert \dot{a}_i(t)\rvert}{a_i(t)}dt\geq \lvert \log a_i(1)-\log a_i(0)\rvert=\lvert \log a_i'' -\log a_i'\rvert. $$

\noindent Hence we have 

$$L_f(c)\geq  \max_i\{\lvert \log a_i'' - \log a_i' \rvert\}=m(a,a').$$

We conclude that 

$$m(a',a'')=\inf{l_F(c)},$$

\noindent where $c:[0,1]\to \frak{AT}_{A}$ is a $C^1$ curve such that $c(0)=a'$ 	and $c(1)=a''$.

	\end{proof}
\end{theorem}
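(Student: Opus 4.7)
The plan is to establish two things: that $F$ satisfies the axioms of a Finsler structure, and that $m$ coincides with the length metric induced by $F$. The first is a quick verification. Continuity of $F$ is clear because all edge lengths $a_i$ are positive on $\frak{AT}_A$, and on each tangent space $F$ is the restriction of the weighted sup-norm $v\mapsto \max_i |v_i|/a_i$, which is a bona fide norm on $\mathbb{R}^3$ and therefore a weak norm when restricted to the tangent plane. So the main content is the equality $m(a',a'') = \inf_c l_F(c)$.

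I would first prove the easy lower bound $l_F(c)\geq m(a',a'')$ for any piecewise $C^1$ curve $c(t)=(a_1(t),a_2(t),a_3(t))$ from $a'$ to $a''$. For each fixed $i\in\{1,2,3\}$ one has
$$l_F(c)=\int_0^1 \max_j \frac{|\dot a_j(t)|}{a_j(t)}\,dt \;\geq\; \int_0^1 \frac{|\dot a_i(t)|}{a_i(t)}\,dt \;\geq\; \left|\int_0^1 \frac{\dot a_i(t)}{a_i(t)}\,dt\right| \;=\; |\log a_i''-\log a_i'|.$$
Taking the maximum over $i$ and using Remark \ref{equal-area}, which expresses $m(a',a'')$ as $\max_i|\log a_i''-\log a_i'|$ for equal-area triangles, yields $l_F(c)\geq m(a',a'')$.

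The main obstacle is to exhibit a curve attaining this infimum. The natural candidate is a smooth geodesic $g:[0,1]\to \frak{AT}_A$ of the form produced in Section \ref{section-special-geodesic}: after a relabeling of the vertices one may arrange that $\exp(m(g(t_1),g(t_2))) = a_3(t_2)/a_3(t_1)$ whenever $t_1\leq t_2$, so in particular $a_3$ is monotone increasing along $g$. The crucial pointwise claim is that $F(\dot g(t)) = \dot a_3(t)/a_3(t)$, equivalently that $|\dot a_i(t)|/a_i(t)\leq \dot a_3(t)/a_3(t)$ for $i\in\{1,2\}$. I would prove this by contradiction. If $\dot a_i(t_1)/a_i(t_1) > \dot a_3(t_1)/a_3(t_1)$ at some $t_1$, then $t\mapsto \log(a_i(t)/a_3(t))$ has strictly positive derivative there, so for some $t_2>t_1$ close to $t_1$ we get $a_i(t_2)/a_i(t_1) > a_3(t_2)/a_3(t_1)$, which forces $\exp(m(g(t_1),g(t_2))) \geq a_i(t_2)/a_i(t_1)$ strictly exceeding $a_3(t_2)/a_3(t_1)$, violating the geodesic identity. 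The opposite-sign case $-\dot a_i(t_1)/a_i(t_1)>\dot a_3(t_1)/a_3(t_1)$ is symmetric, since the reciprocal ratio $a_i(t_1)/a_i(t_2)$ also appears in the maximum defining $m$.

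Once the claim is established,
$$l_F(g)=\int_0^1 \frac{\dot a_3(t)}{a_3(t)}\,dt = \log a_3''-\log a_3'=m(a',a''),$$
and together with the lower bound this gives $m(a',a'')=\inf_c l_F(c)$, so $m$ is the length metric associated with the Finsler structure $F$.
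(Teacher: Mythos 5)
Your proposal is correct and follows essentially the same route as the paper: the same lower bound $l_F(c)\geq \max_i|\log a_i''-\log a_i'|=m(a',a'')$ for arbitrary curves, and the same contradiction argument (via the sign of $\frac{d}{dt}\log(a_i/a_3)$ against the geodesic identity $\exp(m(g(t_1),g(t_2)))=a_3(t_2)/a_3(t_1)$) to show the geodesic from Section \ref{section-special-geodesic} attains the infimum. The only addition is your explicit verification that $F$ is a Finsler structure, which the paper leaves as an easy remark.
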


\section{Symmetries of the space $\overline{\frak{AT}_A}$}
\label{section-symmetry}
In this section we study the isometry group $\text{Isom}(\overline{\frak{AT}_A})$ of $\overline{\frak{AT}_A}$ with respect to the metric $m$.

The space $\overline{\frak{AT}}_{A}$ has three boundary components, each of them corresponding to one angle of the triangle becoming right. If we consider the angle model and fix $i \in \{1,2,3\}$, then a boundary component is given as follows:

$$\{(\theta_1,\theta_2,\theta_3): 0<\theta_1,\theta_2,\theta_3\leq \frac{\pi}{2}, \theta_1+\theta_2+\theta_3=\pi, \theta_i=\frac{\pi}{2}\}.$$

This shows first that topologically $\overline{\frak{AT}}_{A}$ is a disc with three punctures on its boundary. A puncture may be regarded as a ``triangle" with angles $\frac{\pi}{2},\frac{\pi}{2},0$ and area $A$. Furthermore these boundary components are geodesics since any injective continuous map from $[0,1]$ to a boundary component is a geodesic. This can easily be deduced  from Theorem \ref{classification-geodesics}. We represent the boundary component for which $\theta_i=\frac{\pi}{2}$ by 
$\mathcal{B}_i$. Note that in the angle model, $\overline{\frak{AT}_A}$ is just a  Euclidean triangle with three punctures at its vertices.

It is not difficult to see that $\overline{\frak{AT}}_{A}$ is unbounded. Let us denote the equilateral triangle having area $\frac{1}{2}$ by $T_e$. Consider a sequence of isosceles triangles $T_n=(\theta_1(n),\theta_2(n),\theta_3(n))$ so that $\theta_2(n)=\theta_3(n)$ and $\lim_{n \to \infty}\theta_3(n)=\frac{\pi}{2}$. It requires a simple calculation to show that $m(T_e,T_n)\to \infty$ as $n \to \infty$. Now consider a sequence of triangles $T'_n=(\theta_1(n),\frac{\pi}{2},\theta_3(n))$, where $\theta_1(n)\to \frac{\pi}{2}$ as $n \to \infty$. Let us also consider the sequence $T''_n=(\frac{\pi}{2},\theta_1(n),\theta_3(n))$. It can be shown that $m(T'_n,T_n'')\to 0 $ as $n \to \infty$. See Remark \ref{near-puncture}. This means that $\overline{\frak{AT}}_{\frac{1}{2}}$ resembles an ideal triangle in the hyperbolic plane: it has three geodesic boundary components and any two of these components is a line which converges from each side to a puncture.

Now we consider the isometry group $\text{Isom}(\overline{\frak{AT}}_{A})$. The symmetric group $S_3=Sym\{1,2,3\}$ can be regarded as a subgroup of $\text{Isom}(\overline{\frak{AT}}_{A})$ as follows. Let $\sigma \in S_3$ and consider the length model for $\overline{\frak{AT}}_{A}$. We identify $\sigma$ with the map sending $(a_1,a_2,a_3)$ to $(a_{\sigma(1)},a_{\sigma(2)},a_{\sigma(3)})$. It is not difficult to see that this map is an isometry of $\overline{\frak{AT}}_{A}$. Thus we may consider $S_3$ as a subgroup of $\text{Isom}(\overline{\frak{AT}}_{A})$.  We wish to prove now that there are no other isometries, that is, $S_3=\overline{\frak{AT}}_{A}$. 

Let $\{i,j,k\}=\{1,2,3\}$. The 2-uple $\{T,T'\}$
is called an {\it$i$-pair} if $T \in \mathcal{B}_j$, $T'\in \mathcal{B}_k$ and $\theta_k=\theta'_j$.
Note that since $\theta_i=\theta'_i$, there is a unique geodesic between $T$ and $T'$ up to parametrization.

\begin{lemma}
\label{distance-boundary}
	If $\{T,T'\}$ and $\{T_1,T'_1\}$ are different   $i$-pairs, then $m(T,T')\neq m(T_1,T'_1)$.
	\begin{proof}
	Without loss of generality assume that $i=1, j=2, k=3$. Since $\overline{\frak{AT}_A}$  and $\overline{\frak{AT}_{\frac{1}{2}}}$ are isometric, we may suppose that $A=\frac{1}{2}$. This will make the calculations easier. Then in the edge model
	$$T=(a,\sqrt{a^2+\frac{1}{a^2}},\frac{1}{a}),  T'=(a, \frac{1}{a}, \sqrt{a^2+\frac{1}{a^2}}).$$
	It follows that
	$$m(T,T')= \log{\sqrt{a^2+\frac{1}{a^2}}}-\log \frac{1}{a}=\frac{1}{2}\log(1+a^4),$$
	
	\noindent which is an injective function of $a$. This proves the claim.
	\end{proof}
\end{lemma}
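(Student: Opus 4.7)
My plan is to parametrize the family of $i$-pairs by a single real number, then compute $m(T,T')$ as an explicit function of that parameter and observe that the resulting function is strictly monotone, hence injective.

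First I would use symmetry to simplify. Since $\overline{\frak{AT}_A}$ and $\overline{\frak{AT}_{A'}}$ are isometric for any $A,A' > 0$, I set $A = \frac{1}{2}$. Without loss of generality, take $(i,j,k) = (1,2,3)$. The condition that $\{T,T'\}$ is a $1$-pair forces $\theta_2 = \tfrac{\pi}{2}$, $\theta'_3 = \tfrac{\pi}{2}$, and $\theta_3 = \theta'_2$; the angle sum condition $\theta_1 + \theta_2 + \theta_3 = \pi$ (applied to both triangles) then implies also $\theta_1 = \theta'_1 = \tfrac{\pi}{2} - \theta_3$. Consequently $T$ and $T'$ share an equal $e_1$ edge length, and the pair is determined by the single positive parameter $a := \|e_1\| = \|e'_1\|$.

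Next I compute the edges explicitly. Since $T$ is a right triangle at $v_2$ with $\|e_1\| = a$ and $\mathrm{Area}(T) = \tfrac{1}{2}\|e_1\|\|e_3\| = \tfrac{1}{2}$, one has $\|e_3\| = 1/a$, and then Pythagoras gives $\|e_2\| = \sqrt{a^2 + a^{-2}}$; symmetrically, $T'$ is a right triangle at $v_3$ with edges $(a, a^{-1}, \sqrt{a^2 + a^{-2}})$. This is exactly the parametrization stated in the lemma. Now I plug into formula (\ref{formula-m}): the $i=1$ ratio is $1$, while for $i \in \{2,3\}$ both $\|e_2\|/\|e'_2\|$ and $\|e'_3\|/\|e_3\|$ equal $a\sqrt{a^2+a^{-2}} = \sqrt{1+a^4}$, and their reciprocals are less than $1$. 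Therefore
\[
m(T,T') \;=\; \log\sqrt{1+a^4} \;=\; \tfrac{1}{2}\log(1+a^4),
\]
which is strictly increasing in $a \in (0,\infty)$, hence injective. Two distinct $1$-pairs correspond to distinct values of $a$ and thus to distinct values of $m(T,T')$, proving the claim.

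There is no real obstacle here beyond bookkeeping: the only subtlety is verifying that a $1$-pair is genuinely a one-parameter family, which comes out cleanly from the two right-angle conditions combined with $\theta_3 = \theta'_2$ forcing $\theta_1 = \theta'_1$. Once this is seen, the rest is a direct computation. The same argument handles $2$-pairs and $3$-pairs by relabeling, so one may indeed restrict to $i=1$ without loss of generality.
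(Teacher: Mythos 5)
Your proof is correct and follows essentially the same route as the paper: normalize to $A=\tfrac12$, parametrize the $1$-pairs by $a=\lVert e_1\rVert$, compute the edge triples $(a,\sqrt{a^2+a^{-2}},a^{-1})$ and $(a,a^{-1},\sqrt{a^2+a^{-2}})$, and read off $m(T,T')=\tfrac12\log(1+a^4)$, which is strictly increasing in $a$. The extra detail you supply (that the two right-angle conditions plus $\theta_3=\theta'_2$ force $\theta_1=\theta'_1$ and hence a one-parameter family) is a worthwhile explicit justification of what the paper leaves implicit, but it is not a different argument.
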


\begin{remark}
	\label{near-puncture}
	As $a\to 0$, $m(T,T')=\frac{1}{2}\log(1+a^4)\to 0$. This means that the distance between two boundary components is arbitrarily small near a puncture.
\end{remark}

\begin{theorem}
	$\text{Isom}(\overline{\frak{AT}_A})=S_3$.
	\begin{proof}
		It suffices to prove that an isometry that sends  $\mathcal{B}_i$ to itself for all $i \in \{1,2,3\}$ is the identity. Let $\sigma$ be such an isometry. Consider an $i-$pair $\{T,T'\}$. First of all observe that $\{\sigma(T),\sigma(T')\}$ is an $i$-pair. This is true since two elements in different boundary components are $l$-pairs for some $l \in \{1,2,3\}$ if and only if there is, up to parametrization, a unique geodesic between them. But to be joined by a unique geodesic is an isometry invariant. It follows that $m(T,T')=m(\sigma(T),\sigma(T'))$. Since $\sigma$ fixes boundary components, Lemma \ref{distance-boundary} implies that $T=\sigma(T)$ and $T'=\sigma(T')$. Therefore $\sigma$ gives us an isometry of the geodesic between $T$ and $T'$. 
		But if we restrict $m$ to this   geodesic, we get the usual metric on some interval in the real line. Since an isometry of an interval fixing its endpoint is trivial, it follows that the restriction of $\sigma$ to such a geodesic is the identity. Since any point in $\overline{\frak{AT}}_A$ lies in such a geodesic, it follows that $\sigma$ is the identity map. 
	\end{proof}
\end{theorem}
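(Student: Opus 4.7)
The inclusion $S_3 \subseteq \mathrm{Isom}(\overline{\frak{AT}_A})$ has already been established, so the task is to show every isometry belongs to $S_3$. I would proceed in three stages.

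\emph{Stage 1: reducing to isometries that fix each boundary component.} In either the edge or the angle model, $\overline{\frak{AT}_A}$ is a $2$-manifold with boundary (topologically a closed triangle with its three corners removed), and the components $\mathcal{B}_1,\mathcal{B}_2,\mathcal{B}_3$ of this manifold boundary are exactly the right-triangle loci. The earlier section comparing topologies shows that $m$ and the Euclidean metric induce the same topology on $\overline{\frak{AT}_A}$, so any $m$-isometry is a homeomorphism and therefore permutes the three components of the topological boundary. Since the action of $S_3$ on the edge model (or equivalently on the angles via the law of sines) already realizes every permutation of $\{\mathcal{B}_1,\mathcal{B}_2,\mathcal{B}_3\}$, I may compose a given isometry $\sigma$ with an element of $S_3$ and assume $\sigma(\mathcal{B}_i)=\mathcal{B}_i$ for every $i$.

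\emph{Stage 2: $\sigma$ fixes each $i$-pair.} Fix $i\in\{1,2,3\}$ and let $\{T,T'\}$ be an $i$-pair with $T\in\mathcal{B}_j$ and $T'\in\mathcal{B}_k$ for the other two indices. A short computation with angles shows that the $i$-pair condition $\theta_k=\theta'_j$ is equivalent to $T$ and $T'$ sharing the angle at vertex $i$. By the uniqueness remark after Theorem \ref{classification-geodesics}, two points that agree at one angle are joined by a unique (unparametrized) geodesic, while two points with no common angle admit uncountably many joining geodesics. Since uniqueness of a joining geodesic is an isometric invariant, $\{\sigma(T),\sigma(T')\}$ is again an $l$-pair for some $l$; and since $\sigma(\mathcal{B}_j)=\mathcal{B}_j$ and $\sigma(\mathcal{B}_k)=\mathcal{B}_k$, the indices $j,k$ are preserved and the new pair is again an $i$-pair. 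Lemma \ref{distance-boundary} asserts that the map sending an $i$-pair (with fixed component assignment) to its $m$-distance is injective, and $m(\sigma(T),\sigma(T'))=m(T,T')$, so $\{\sigma(T),\sigma(T')\}=\{T,T'\}$; the component assignment then forces $\sigma(T)=T$ and $\sigma(T')=T'$.

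\emph{Stage 3: from fixing $i$-pairs to fixing all points.} For any $i$-pair $\{T,T'\}$ the unique joining geodesic, equipped with the restricted metric, is isometric to a closed interval in $\R$. By Stage 2, $\sigma$ preserves this geodesic as a set and fixes its two endpoints; any self-isometry of an interval fixing both endpoints is the identity, so $\sigma$ is the identity on the whole geodesic. It remains to observe that every point of $\overline{\frak{AT}_A}$ lies on such a geodesic: in the angle model, through any $T_0=(\theta_1,\theta_2,\theta_3)$ the straight segment obtained by fixing $\theta_i$ and letting the other two coordinates vary (subject to the sum being $\pi$ and all angles in $(0,\pi/2]$) is, by Theorem \ref{classification-geodesics}, a geodesic whose endpoints lie in $\mathcal{B}_j$ and $\mathcal{B}_k$ and share the angle $\theta_i$, hence form an $i$-pair containing $T_0$. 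Thus $\sigma$ is the identity on $\overline{\frak{AT}_A}$, completing the proof that $\mathrm{Isom}(\overline{\frak{AT}_A})=S_3$.

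\emph{Main obstacle.} The subtlest step is Stage 1: one needs an intrinsic characterization of the boundary components strong enough that an abstract isometry must permute them. The cleanest route is the topological one sketched above, relying on the already-proved agreement between $\mathcal{T}_m$ and $\mathcal{T}_{euc}$. A purely metric characterization (for instance, that boundary points are those lying on a maximal geodesic admitting no two-sided extension) would also work but requires extra care near the punctures, where Remark \ref{near-puncture} shows that different boundary components come arbitrarily close together.
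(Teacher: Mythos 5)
Your proposal is correct and follows essentially the same route as the paper: reduce to an isometry fixing each $\mathcal{B}_i$, use the unique-geodesic characterization of $i$-pairs together with Lemma \ref{distance-boundary} to pin down the endpoints, and then extend to all of $\overline{\frak{AT}_A}$ via the geodesics through an arbitrary point. Your Stage 1 usefully spells out the reduction (via the agreement of $\mathcal{T}_m$ with $\mathcal{T}_{euc}$ and invariance of the topological boundary) that the paper leaves implicit in its opening ``it suffices to prove'' step, but this is a clarification rather than a different argument.
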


\begin{figure}
	\hspace*{-1 cm} 
	\includegraphics[scale=0.85]{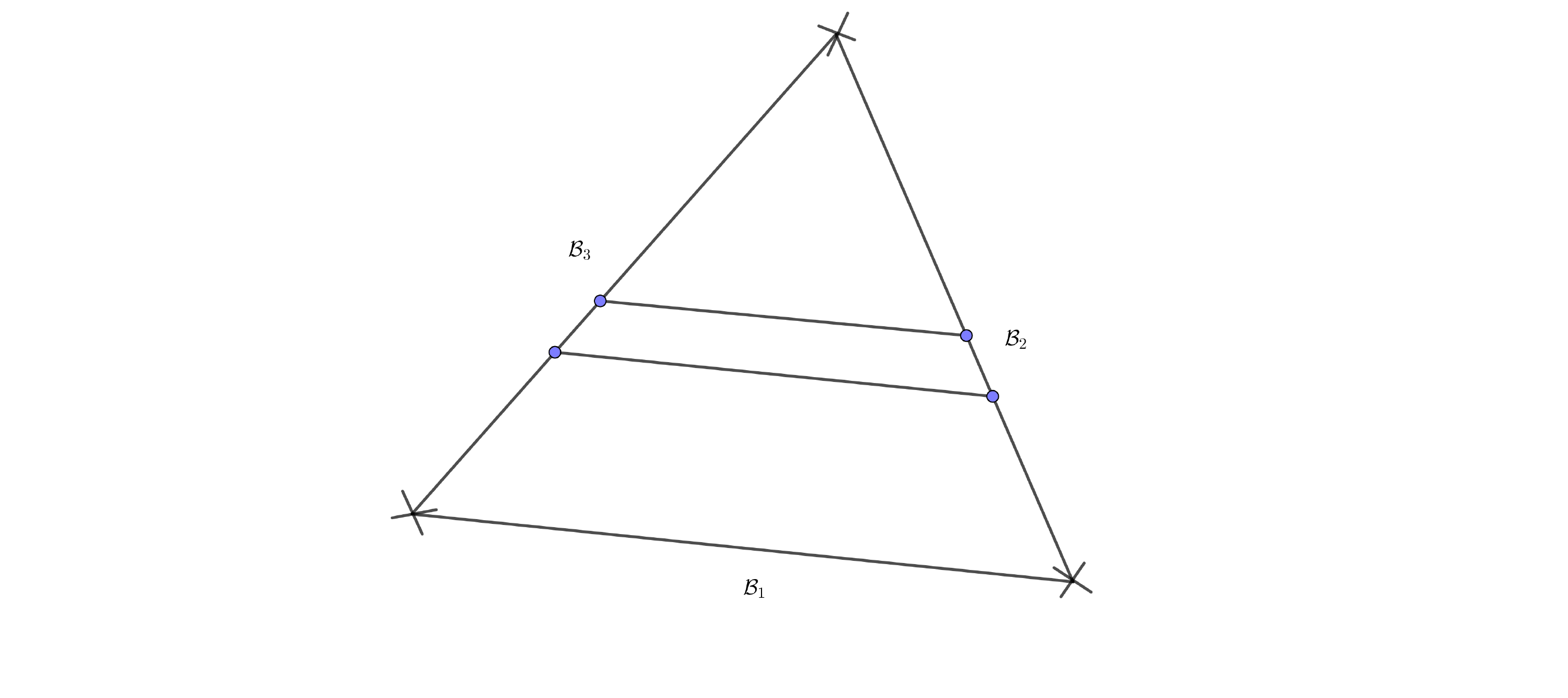}
	
	\caption{The angle model}
	\label{angle-model}
\end{figure}

In Figure \ref{angle-model}, we give the angle model together with some $1$-pairs and geodesics between them. Note that the geodesic between 
two $i$-pairs is a straight line segment  which is parallel to the boundary component $\mathcal{B}_i$. Consider the function from $\overline{\frak{AT}_A} \to \R_+^*$ which sends 
a triangle to its angle at the $i$-th vertex. Then it follows that the inverse image of any point in $\R^*_+$ is a geodesic segment between two $i$-pairs.

	\bigskip

\noindent {\bf Acknowledgements}
The first named author is financially supported by T\"{U}B\.{I}TAK.

\end{document}